\DeclareMathOperator\Real {Re}
\def\Re {{\mathbb R}}
\def\N{{\mathbb N}}
\begin{document}

\title{Backward difference formula: The energy technique for subdiffusion equation
\thanks{This work was supported by NSFC 11601206 and Hong Kong RGC grant (No. 25300818).
}}


\author{Minghua Chen         \and
        Fan Yu          \and
       Zhi Zhou
}


\institute{M. Chen (\Envelope) \and F. Yu   \at
              School of Mathematics and Statistics, Gansu Key Laboratory of Applied Mathematics and Complex Systems, Lanzhou University, Lanzhou 730000, P.R. China\\
email:chenmh@lzu.edu.cn;  yuf17@lzu.edu.cn\\
           \and
         Z. Zhou  \at
              Department of Applied Mathematics, The Hong Kong Polytechnic University, Kowloon, Hong Kong, P.R. China\\
              email: zhizhou@polyu.edu.hk
}


\maketitle

\begin{abstract}
Based on the  equivalence of A-stability and G-stability, the energy technique of the six-step BDF method for the  heat equation has been discussed in
[Akrivis,  Chen, Yu,  Zhou,   Math. Comp.,  Revised].
Unfortunately, this theory  is hard to extend the time-fractional PDEs.
In this work, we consider three types of subdiffusion models, namely single-term, multi-term and distributed order fractional diffusion equations.  We present a novel and concise stability   analysis of time stepping schemes
generated by $k$-step backward difference formula (BDF$k$), for approximately
solving the subdiffusion equation.
The analysis mainly relies on the energy technique  by applying Grenander-Szeg\"{o} theorem.
This kind of argument has been widely used to confirm the stability of various $A$-stable schemes (e.g., $k=1,2$).
However, it is not an easy task  for the higher-order BDF methods, due to the loss the $A$-stability.
The core object of this paper is to fill in this gap.
\keywords{Subdiffusion equation \and backward difference formula \and stability  analysis \and energy technique}
\end{abstract}

\section{Introduction}
Let $T >0, \varrho\in H,$ and consider the   single-term subdiffusion, for which the
governing equation is given by  \cite{CB:11,Carmi:10,ChenD:15,CD:18,SC:20}
\begin{equation} \label{1.1}
\left\{ \begin{array}
 {l@{\quad} l}
\partial_t^{\alpha,\sigma} \left(u(t)-e^{-\sigma t}\varrho\right)+Au(t)=0,~~0<t<T,\\
   u(0)=\varrho,
 \end{array}
 \right.
\end{equation}
where $\sigma\geqslant0$, $A$ is a positive definite, selfadjoint, linear operator on a Hilbert space $(H, (\cdot , \cdot )) $ with domain  $D(A)$ dense in $H$. Let $| \cdot |$  denote the norm on $H$ induced by the inner product $(\cdot , \cdot )$, and introduce on $V, V:=D(A^{1/2}),$
the norm $\| \cdot \|$   by $\| v\| :=| A^{1/2} v |.$
We identify $H$ with its dual, and denote by $V'$ the dual of $V$,
and by $\| \cdot \|_\star$ the dual norm on $V', \|v \|_\star=| A^{-1/2} v |.$
We shall use the notation $(\cdot , \cdot )$ also for the antiduality
pairing between $V$ and $V'$.
Here $\partial_t^{\alpha,\sigma}$, with $\alpha\in(0,1)$, denotes the  fractional substantial derivative
 in time variable \cite{Chendeng:13,ChenD:15,FJBE:06}
\begin{equation}\label{1.2}
\partial_t^{\alpha,\sigma} u(t)=\frac{1}{\Gamma(1-\alpha)}  \left[\frac{\partial}{\partial t}+\sigma\right] \int_{0}^t (t-s)^{-\alpha} e^{-\sigma(t-s)} u(s)ds.
\end{equation}
In addition, under the initial condition $u(0)=\varrho$, the  fractional substantial derivative $\partial_t^{\alpha,\sigma} \left(u(t)-e^{-\sigma t}\varrho\right)$
in the model \eqref{1.1} is identical with the usual Caputo  fractional substantial derivative.
The time-fractional diffusion model \eqref{1.1} could be derived by using the continuous time random walk, describing anomalous diffusion process \cite{Klafter:11}.
They have recently attracted a lot of attention in physics and chemistry (diffusion in different media, reactions, mixing in hydrodynamic flows), biology (from the motion of animals to that of subcellular structures in the crowded environment inside cells), and many other disciplines \cite{Klafter:11,Podlubny:99}.

Our purpose in this paper is to discuss the stability for the more general initial boundary value problem
\begin{equation} \label{1.3}
\left\{ \begin{array}
 {l@{\quad} l}
P\left(\partial_t\right)\left(u(t)-e^{-\sigma t}\varrho\right)+Au(t)=0,~~0<t<T,\\
   u(0)=\varrho,
 \end{array}
 \right.
\end{equation}
where $P\left(\partial_t\right)u $ denotes a   fractional substantial  differential operator of the form
\begin{equation}\label{1.4}
P\left(\partial_t\right)u(t)=\int_0^1\partial_t^{\alpha,\sigma} u(t)d \nu \left(\alpha\right),
\end{equation}
with $\nu \left(\alpha\right)$ a positive measure on $[0,1]$. We remark that it is reduced to fractional order differential operators if $\sigma=0$ in \cite{JLTZ:17}.
These more general models, including in
addition to the single-term model \eqref{1.1}, multi-term and distributed order models are reviewed briefly.
In the multi-term model,
\begin{equation*}
P\left(\partial_t\right)u(t)=\sum_{i=1}^m b_i\partial_t^{\alpha_i,\sigma}u(t),
\end{equation*}
where the constants $b_i$ are positive and $0<\alpha_m<\ldots<\alpha_1<1$. It becomes  the single-term subdiffusion
model \eqref{1.1} when $m = 1$. In the distributed order model,
\begin{equation*}
P\left(\partial_t\right)u(t)=\int_0^1\partial_t^{\alpha,\sigma} u(t) \mu\left(\alpha\right)d \alpha ,
\end{equation*}
where   $\mu\left(\alpha\right)$ is a nonnegative weight function.
Formally, the multi-term subdiffusion model can be seen as the distributed order subdiffusion
model associated with the weight function $\mu\left(\alpha\right)=\sum_{i=1}^m b_i\delta\left(\alpha-\alpha_i\right)$, where $\delta$ is
the Dirac-delta function.

Stability of the A-stable one- and/or two-step BDF methods
can be easily proved  by the energy method. The powerful
 Nevanlinna--Odeh multiplier technique \cite{NO:81} extends the applicability
 of the energy method to the non A-stable three-, four- and five-step BDF methods.
Using results from Dahlquist's G-stability theory \cite{Dahlquist:78}, Liu constructs  the new telescope formulas for the BDF$k$ ($k=3,4,5$) schemes of parabolic equations \cite{Liu:13}, but it fails to BDF6.
According to  G-stability theory and Nevanlinna-Odeh multipliers \cite{NO:81}, Lubich et al. analyze the BDF methods up to order five for parabolic equations \cite{LMV:13}.
Based  on  the cerebrated equivalence of A-stability and G-stability \cite{BC:89,HW:10},
Akrivis et al. construct new multipliers and  analyze the BDF method for parabolic equations \cite{Akrivis:18,AK:16}.
Recently, the energy technique for the six-step BDF method is first established for the heat equations  \cite{ACYZ:20}, which also shows that no Nevanlinna-Odeh multiplier exists.

However, as the mentioned above, the   G-stability theory \cite{Liu:13} or the  equivalence of A-stability and G-stability  \cite{ACYZ:20} are not easy to extend to subdiffusion equation.
Using fractional backward difference formula, error analysis of up to sixth order temporal accuracy for fractional ordinary differential equation has been discussed \cite{Chendeng:13,Lubich:86} with the starting quadrature weights schemes. A few years later, based on operational calculus with sectorial operator,
 nonsmooth data error estimates for fractional evolution equations have been studied in \cite{Cuesta:06,Lu:96} and extended to \cite{Jin:16,Jin:17} including fractional substantial PDEs \cite{SC:20} to restore up to six-order.
Under the time regularity assumption, high order finite difference method  (BDF2) for the anomalous diffusion equation has been studied in \cite{LiD:13} by analyzing the properties of the coefficients.
Using  Grenander-Szeg\"{o} theorem, stability and convergence for time-fractional subdiffusion equation have been provided in \cite{Gao:15,Ji:15} and
also developed in \cite{CD:18} for  BDF2.
To the best of our knowledge, we are unaware of any other published works on stability analysis of BDF$k$ $(k\geqslant3)$ schemes for subdiffusion equation by the energy technique. This gap in the research literature is the motivation for our work. In this paper, we introduce multipliers satisfying the positivity property \eqref{pos-prop} and the A-stability property \eqref{A} for the BDF$k$ $(k\geqslant3)$ method and establish  a novel stability analysis for time-fractional subdiffusion equation by the energy technique.

An outline of the paper is as follows. In Section \ref{sec:1}, we recall the  BDF$k$ (corrected) schemes  for the model  \eqref{1.1} and introduce  multipliers.
In Section 3, we provide some relevant lemmas and prove the positivity property \eqref{pos-prop} and the A-stability property
 \eqref{A} for the BDF$k$ $(k\geqslant3)$ method that are needed for the subsequent stability analysis. In Section \ref{Se:stab}, we use the  multipliers in combination with
the Grenander--Szeg\"o theorem to establish stability for the BDF$k$ corrected scheme by the energy technique.

\section{Multipliers and up to six-step BDF method}\label{sec:1}
Let $N\in \N,$ $\tau:=T/N$ be the time step, and $t_n :=n \tau,
n=0,\dotsc ,N,$ be a uniform partition of the interval $[0,T].$
The  fractional substantial derivative $\partial_{t}^{\alpha,\sigma}\varphi(t_n)$ can be approximated by \cite{Chendeng:13}
\begin{equation}\label{2.1}
 \bar{\partial}_{\tau}^{\alpha,\sigma}\varphi^n:=\frac{1}{{\tau}^{\alpha}}\sum_{j=0}^{n}g_j^k\varphi^{n-j}
\end{equation}
with $\varphi^{n}=\varphi(t_n)$, where the the coefficients $\{g_j^k\}_{j=0}^{\infty}$ are determined by the ($k$-step BDF method) generating power series  $g(\zeta)$,
\begin{equation}\label{2.01}
g(\zeta)=\left(\sum_{j=1}^{k}\frac{1}{j}(1-e^{-\sigma\tau}\zeta)^{j}\right)^\alpha=\sum_{j=0}^{\infty}g_j^k{\zeta}^{j},\quad g_j^k=e^{-\sigma j \tau}l_j^k,
\end{equation}
It should be noted that there are several ways to compute the coefficients $l_j^k$. For example, it can be calculated efficiently by the fast Fourier transform
or recursion in \cite[Chapter 7]{Podlubny:99}; or direct calculation in \cite{CD:13,CD:14}. Here we introduce the simple and efficient formulas to compute $l_j^k$ with linearly computational count, see  Appendix.

We recursively define a sequence of approximations $u^n$ to the nodal values $ u(t_n)$ by the $k$-step BDF method.
Correspondingly, the BDF$k$ scheme for solving \eqref{1.3} seeks approximations $u^n, n=1,...,N$ to the analytic  solution $u(t_n)$  by  \cite{Chendeng:13}
\begin{equation*}
{\bf BDFk~ scheme}~~~P\left(\bar{\partial}_{\tau}\right)(u^n-e^{-\sigma n \tau}\varrho)+Au^n=0,\quad u^0=\varrho.
\end{equation*}
The low regularity of the solution of (1.1) implies the above  standard BDF$k$ scheme  only yields a first-order accuracy \cite{Sakamoto:11,Thomee:06}. To restore the $k$th-order accuracy for BDF$k$,
the BDF$k$ scheme has been  corrected  at the starting $k-1$ steps by \cite{SC:20}
\begin{equation}\label{2.0083}
\begin{split}
{\bf BDFk~corrected~ scheme}~~~~~~~&P\left(\bar{\partial}_{\tau}\right)(u^n-e^{-\sigma n \tau}\varrho)+Au^n=-e^{-\sigma n \tau}a_n^{(k)}A\varrho,  1\leqslant n\leqslant k-1,\\
&P\left(\bar{\partial}_{\tau}\right)(u^n-e^{-\sigma n \tau}\varrho)+Au^n=0,  ~~~~\quad\quad\quad\quad\quad k\leqslant  n\leqslant N,
\end{split}
\end{equation}
where the coefficients $a_n^{(k)}$ are given in Table \ref{table:1}.
Taking $w^n:=u^n-e^{-\sigma n \tau}\varrho$ with $w^0=0$, we can rewrite  \eqref{2.0083} as
\begin{equation}\label{2.0084}
\begin{split}
&P\left(\bar{\partial}_{\tau}\right)w^n+Aw^n=-e^{-\sigma n \tau}\left(1+a_n^{(k)}\right)A\varrho,~~~\quad 1\leqslant n\leqslant k-1,\\
&P\left(\bar{\partial}_{\tau}\right)w^n+Aw^n=-e^{-\sigma n \tau}A \varrho,  \quad\quad\quad\quad\quad\quad  k\leqslant n\leqslant N.
\end{split}
\end{equation}
\begin{table}[h]\fontsize{9.5pt}{15pt}\selectfont
 \begin{center}
  \caption {The coefficients $a_n^{(k)}$.} \vspace{5pt}
\begin{tabular*}{\linewidth}{@{\extracolsep{\fill}}*{1}{|c c c c c c|}}         \hline  
BDF$k$            &$a_1^{(k)}$          &$a_2^{(k)}$          &$a_3^{(k)}$       &$a_4^{(k)}$          & $a_5^{(k)}$   \\ \hline
$k=2$             &$\frac{1}{2}$        &     0               &  0                &  0                 &  0              \\
$k=3$             &$\frac{11}{12}$      &$-\frac{5}{12}$      &  0                &  0                 &  0               \\
$k=4$             &$\frac{31}{24}$      &$-\frac{7}{6}$       &$\frac{3}{8}$      &  0                 &  0                \\
$k=5$             &$\frac{1181}{720}$   &-$\frac{177}{80}$    &$\frac{341}{240}$  &-$\frac{251}{720}$  &  0                 \\
$k=6$             &$\frac{2837}{1440}$  &-$\frac{2543}{720}$  &$\frac{17}{5}$     &-$\frac{1201}{720}$ & $\frac{95}{288}$   \\ \hline
    \end{tabular*}\label{table:1}
  \end{center}
\end{table}
\subsection{Multipliers for BDF methods}

From  the A-stable definition  \cite{HW:10,NO:81}, we introduce the following definition.
\begin{definition}[A-stability]\label{De:As}
Let $g(\zeta)$ be the generating power series of the $k$-step BDF method defined in \eqref{2.01}. Let
$\mu(\zeta)=1-\mu_1e^{-\sigma \tau}\zeta-\dotsb-\mu_k\left(e^{-\sigma \tau}\zeta\right)^k$ be a polynomial,
with real coefficients and roots outside the unit disk.  In addition, the generating power series $g$ and  polynomials $\mu$ have no common divisor. Then, we call the $k$-step scheme described by the pair $(g,\mu)$ is A-stable if
\begin{equation}
\label{A}
\Real \frac {g(\zeta)}{\mu(\zeta)}>0\quad\text{for }\, |\zeta|<1.
\tag{A}
\end{equation}
\end{definition}

Now, $g(\zeta)/\mu(\zeta)$ is holomorphic inside the unit disk in the complex plane, and
$$\lim_{|\zeta|\rightarrow 0}\frac {g(\zeta)}{\mu(\zeta)}=g_0>0.$$
Therefore, using  the maximum principle for harmonic functions, the A-stability property \eqref{A} is equivalent to
\begin{equation*}
\Real \frac {g(\zeta)}{\mu(\zeta)}\geqslant 0 \quad\forall \zeta\in \mathcal{K},
\end{equation*}
with $\mathcal{K}$ the unit circle in the complex plane, $\mathcal{K}:= \{\zeta \in \mathbb{C}:|\zeta|=1\}$.
\begin{definition}[Multipliers]\label{De:mult}
Let $g(\zeta)$ be the generating power series of the $k$-step BDF method defined in \eqref{2.01}.
Consider a $k$-tuple $(\mu_1,\dotsc,\mu_k)$ of real numbers such that
with the given $g(\zeta)$ and
$\mu(\zeta):=1-\mu_1e^{-\sigma \tau}\zeta-\dotsb-\mu_k\left(e^{-\sigma \tau}\zeta\right)^k$, and
the pair $(g,\mu)$ satisfies the A-stability condition \eqref{A}, and, in addition, the generating power series $g$ and  polynomials $\mu$ have no common divisor.
Then, we call $(\mu_1,\dotsc,\mu_k)$ simply \emph{multiplier} if it satisfies the \emph{positivity} property
\begin{equation}
\label{pos-prop}
1-\mu_1e^{-\sigma \tau}\cos x-\dotsb-\mu_ke^{-\sigma k \tau}\cos (kx) >0 \quad \forall x \in \Re. \tag{P}
\end{equation}
\end{definition}

In this paper, the simply multiplier $(\mu_1,\dotsc,\mu_k)$ for the $k$-step BDF method list in the following, see Table \ref{table:2}.
\begin{table}[h]\fontsize{9.5pt}{15pt}\selectfont
 \begin{center}
  \caption {Multipliers for the up to six-step BDF method.} \vspace{5pt}
\begin{tabular*}{\linewidth}{@{\extracolsep{\fill}}*{1}{|c c c c c c c|}}         \hline  
BDF$k$            &$\mu_1$          &$\mu_2$          &$\mu_3$       &$\mu_4$          & $\mu_5$  & $\mu_6$  \\ \hline
$k=3$             &$\frac{1}{2}$        &     0               &  0                &                   &    &             \\
$k=4$             &$\frac{1}{2}$      &0      &0      &  0         &        &                 \\
$k=5$             &$1$   &-$\frac{1}{4}$    &0  &0  &  0         &        \\
$k=6$             &$\frac{43}{30}$  &-$\frac{2}{3}$  &$\frac{1}{10}$     &0 &0   &0 \\ \hline
    \end{tabular*}\label{table:2}
  \end{center}
\end{table}

\begin{remark}\label{rem:NO-six}
From \cite{HW:10,NO:81}, we know that the  A-stability is equivalent to G-stability for parabolic equation.
However, this equivalence property is still an open question for subdiffusion equation
and awaits further investigation.
\end{remark}

For simplicity, we denote by $\langle\cdot,\cdot\rangle$ the inner product on $V,$ $\langle w, v\rangle:=(A^{1/2}w, A^{1/2}v).$
To prove stability of the method by the energy technique, we test \eqref{2.0084}
by $v^n=w^{n}-\mu_1e^{-\sigma \tau}w^{n-1}-\dotsb-\mu_ke^{-\sigma k\tau}w^{n-k}$ and obtain
\begin{equation}\label{2.4}
\begin{split}
\left(P\left(\bar{\partial}_{\tau}\right)w^n,v^n\right)+\left\langle w^n,v^n\right\rangle=-e^{-\sigma n \tau}\left(1+a_n^{(k)}\right)\left\langle\varrho,v^n\right\rangle.
\end{split}
\end{equation}
For the first term on the left hand side of \eqref{2.4}, from \eqref{1.4}, we have
\begin{equation}\label{2.a4}
\begin{split}
\left(P\left(\bar{\partial}_{\tau}\right)w^n,v^n\right)&=\left(\int_0^1\bar{\partial}_{\tau}^{\alpha,\sigma} w^n d\nu\left(\alpha\right),v^n\right)=\int_0^1\left(\bar{\partial}_{\tau}^{\alpha,\sigma} w^n ,v^n\right)d\nu\left(\alpha\right)\\
&=\frac{1}{\tau^{\alpha}}\int_0^1\left(\sum_{j=0}^n g_j^k w^{n-j},v^n\right)d\nu\left(\alpha\right).
\end{split}
\end{equation}
We consider the integrand of \eqref{2.a4} for our subsequent discussion.

Case 1: $k=3,4$. Taking $v^n=w^n-\mu_1e^{-\sigma \tau}w^{n-1}$ with $\mu_1=1/2$, we have
\begin{equation*}
\begin{split}
\sum_{j=0}^n g_j^k w^{n-j}
&=g_0^k\left(w^{n}-\mu_1e^{-\sigma \tau}w^{n-1}\right)+\left(g_1^k+\frac{1}{2}e^{-\sigma \tau}g_0^k\right)\left(w^{n-1}-\mu_1e^{-\sigma \tau}w^{n-2}\right)+\cdots\\
&\quad+\left(g_{n-1}^k+\frac{1}{2}e^{-\sigma \tau}g_{n-2}^k+\frac{1}{2^2}e^{-2\sigma \tau} g_{n-3}^k+\cdots+\frac{1}{2^{n-1}}e^{-\left(n-1\right)\sigma \tau}g_{0}^k\right)\times\\
&\qquad\left(w^{1}-\mu_1e^{-\sigma \tau}w^{0}\right)=\sum_{j=0}^{n-1} q_j^k v^{n-j}
\end{split}
\end{equation*}
with
\begin{equation}\label{2.6}
\begin{split}
q_j^k=\sum_{m=0}^j\frac{1}{2^{m}}e^{-\sigma m \tau}g_{j-m}^k.
\end{split}
\end{equation}

Case 2: $k=5$. Taking $v^n=w^n-\mu_1e^{-\sigma \tau}w^{n-1}-\mu_2e^{-2\sigma \tau}w^{n-2}$ with $\mu_1=1$, $\mu_2=-1/4$, it yields
\begin{equation*}
\begin{split}
\sum_{j=0}^n g_j^k w^{n-j}
&=g_0^k\left(w^n-\mu_1e^{-\sigma \tau}w^{n-1}-\mu_2e^{-2\sigma \tau}w^{n-2}\right)+\left(g_1^k+e^{-\sigma \tau}g_0^k\right)\times\\
&\qquad\left(w^{n-1}-\mu_1e^{-\sigma \tau}w^{n-2}-\mu_2e^{-2\sigma \tau}w^{n-3}\right)+\cdots+\\
&\qquad\left(g_{n-1}^k+e^{-\sigma \tau}g_{n-2}^k+\frac{3}{2^2}e^{-2\sigma \tau}g_{n-3}^k+\cdots+\frac{n}{2^{n-1}}e^{-\left(n-1\right)\sigma \tau}g_{0}^k\right)\times\\
&\qquad\left(w^{1}-\mu_1e^{-\sigma \tau}w^{0}-\mu_2e^{-2\sigma \tau}w^{-1}\right)=\sum_{j=0}^{n-1} q_j^k v^{n-j}
\end{split}
\end{equation*}
with the  starting values $w^{-1}=0$ \cite{Lu:88}   and
\begin{equation}\label{2.7}
\begin{split}
q_j^k=\sum_{m=0}^j\frac{m+1}{2^{m}}e^{-\sigma m \tau}g_{j-m}^k.
\end{split}
\end{equation}

Case 3: $k=6$. Taking $v^n=w^n-\mu_1e^{-\sigma \tau}w^{n-1}-\mu_2e^{-2\sigma \tau}w^{n-2}-\mu_3e^{-3\sigma \tau}w^{n-3}$ with $\mu_1=43/30$, $\mu_2=-2/3$, $\mu_3=1/10$, there exists
\begin{equation*}
\begin{split}
&\sum_{j=0}^n g_j^k w^{n-j}\\
&=g_0^k\left(w^n-\mu_1e^{-\sigma \tau}w^{n-1}-\mu_2e^{-2\sigma \tau}w^{n-2}-\mu_3e^{-3\sigma \tau}w^{n-3}\right)\\
&\!+\!\left(g_1^k+\frac{43}{30}e^{-\sigma \tau}g_0^k\right)\left(w^{n-1}-\mu_1e^{-\sigma \tau}w^{n-2}-\mu_2e^{-2\sigma \tau}w^{n-3}-\mu_3e^{-3\sigma \tau}w^{n-4}\right)\!+\!\cdots\\
&+\left(g_{n-1}^k+\frac{43}{30}e^{-\sigma \tau}g_{n-2}^k+\!\cdots\!
+\frac{243\times 18^{n-2}-15^{n}+25\times10^{n-2}}{30^{n-1}}e^{-\left(n-1\right)\sigma \tau}g_{0}^k\right)\\
&\quad\times\left(w^{1}-\mu_1e^{-\sigma \tau}w^{0}-\mu_2e^{-2\sigma \tau}w^{-1}-\mu_3e^{-3\sigma \tau}w^{-2}\right)
=\sum_{j=0}^{n-1} q_j^k v^{n-j}
\end{split}
\end{equation*}
with the  starting values $w^{-1}=w^{-2}=0$ and
\begin{equation}\label{2.8}
\begin{split}
q_j^k=\sum_{m=0}^j\frac{243\times 18^{m-1}-15^{m+1}+25\times10^{m-1}}{30^{m}}e^{-\sigma m \tau}g_{j-m}^k.
\end{split}
\end{equation}
Therefore, E.q. (\ref{2.a4}) can be written in the following equivalent form
\begin{equation}\label{2.9}
\begin{split}
\left(P\left(\bar{\partial}_{\tau}\right)w^n,v^n\right)=\frac{1}{\tau^{\alpha}}\int_0^1\left(\sum_{j=0}^{n-1} q_j^k v^{n-j},v^n\right)d\nu\left(\alpha\right).
\end{split}
\end{equation}

To prove stability of the method by the energy technique for (\ref{2.4}).
 First, we sum over $n$ and subsequently estimate the sum of each
terms. The integrand of the first term on the left-hand side can be estimated
from below using Lemmas \ref{lemma4.1}-\ref{lemma4.003}; this is the motivation for the requirement \eqref{A}.
In view of the Grenander--Szeg\"o theorem, the condition \eqref{pos-prop} ensures that symmetric band Toeplitz matrices, with generating function the positive trigonometric polynomial
$1-\mu_1e^{-\sigma \tau}\cos x-\dotsb-\mu_ke^{-\sigma k \tau}\cos (kx)$, are positive definite;
see Lemma \ref{lemma3.1}.

\section{Positivity property (P) and A-stability property (A) }
Before we proceed, for the reader's convenience, we recall the notion of the generating function of
an $n\times n$ Toeplitz  matrix $T_n$ as well as an auxiliary result, the Grenander--Szeg\"o theorem.

\begin{definition}\cite[p.\,27]{Quarteroni:07}\label{definition2.7}
A matrix $A \in \Re^{n\times n}$ is said to be positive definite in $\Re^{n}$ if $(Ax,x)>0$, $\forall x \in \Re^{n}$, $x\neq 0$.
\end{definition}

\begin{lemma}\cite[p.\,28]{Quarteroni:07}\label{lemma2.4}
A real matrix $A$ of order $n$ is positive definite  if and only if  its symmetric part $H=\frac{A+A^T}{2}$ is positive definite.
Let $H \in \Re^{n\times n}$ be symmetric. Then $H$ is positive definite if and only if the eigenvalues of $H$ are positive.
\end{lemma}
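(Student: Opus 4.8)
The plan is to treat the two equivalences in turn, both anchored on Definition \ref{definition2.7}: a real matrix $A$ of order $n$ is positive definite precisely when the quadratic form $(Ax,x)$ is strictly positive for every nonzero $x\in\Re^n$.

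For the first equivalence, the key point is that $A$ and its symmetric part $H=\tfrac12(A+A^T)$ induce exactly the same quadratic form. First I would use that $(Ax,x)=x^TAx$ is a scalar, hence coincides with its own transpose $x^TA^Tx$. Averaging the two identical expressions then gives
\[
(Ax,x)=x^TAx=\tfrac12\bigl(x^TAx+x^TA^Tx\bigr)=x^T\,\tfrac{A+A^T}{2}\,x=(Hx,x)\qquad\forall x\in\Re^n .
\]
Since the two forms are identically equal, $(Ax,x)>0$ for all $x\neq0$ holds if and only if $(Hx,x)>0$ for all $x\neq0$; that is, $A$ is positive definite if and only if $H$ is. This part is a pure identity, so there is essentially no obstacle here.

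For the second equivalence I would invoke the spectral theorem for real symmetric matrices: as $H=H^T$, there exist an orthogonal $Q$ (so $Q^TQ=I$) and a diagonal matrix $\Lambda=\mathrm{diag}(\lambda_1,\dots,\lambda_n)$ of eigenvalues with $H=Q\Lambda Q^T$. Substituting $y=Q^Tx$, which is nonzero exactly when $x$ is because $Q$ is invertible, I would compute
\[
(Hx,x)=x^TQ\Lambda Q^Tx=y^T\Lambda y=\sum_{i=1}^n\lambda_i y_i^2 .
\]
If every $\lambda_i>0$ this sum is strictly positive for each $y\neq0$, so $H$ is positive definite. Conversely, if some $\lambda_j\leqslant0$, then taking $x$ to be the corresponding eigenvector (equivalently $y=e_j$) yields $(Hx,x)=\lambda_j\leqslant0$, violating positive definiteness; hence all eigenvalues must be positive.

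The only genuine input is the orthogonal diagonalization of a real symmetric matrix, which is classical, after which both directions reduce to reading off the sign of a diagonal sum. I expect the only (very modest) care needed to be in the backward direction of the second part, where one must exhibit the explicit test vector $Qe_j$ that detects a nonpositive eigenvalue; the forward directions follow immediately from the displayed identities.
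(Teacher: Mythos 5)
Your argument is correct and complete: the identity $(Ax,x)=(Hx,x)$ settles the first equivalence, and the spectral theorem with the test vectors $Qe_j$ settles the second. The paper itself gives no proof of this lemma --- it is quoted directly from Quarteroni, Sacco and Saleri \cite[p.\,28]{Quarteroni:07} --- and your argument is precisely the standard textbook one, so there is nothing further to reconcile.
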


\begin{definition}\cite[p.\,13]{Chan:07} (the generating function of a Toeplitz matrix) \label{De:gen-funct}
Consider the $n \times n$ Toeplitz  matrix  $T_n=(t_{ij})\in \Re^{n,n}$ with diagonal entries $t_0,$ subdiagonal entries
$t_1,$ superdiagonal entries $t_{-1},$ and so on, and $(n,1)$ and $(1,n)$ entries
$t_{n-1}$ and   $t_{1-n}$, respectively, i.e., the entries $t_{ij}=t_{i-j}, i,j=1,\dotsc,n,$ are constant along the diagonals of $T_n.$
 Let   $t_{-n+1},\dotsc, t_{n-1}$ be the Fourier coefficients of the trigonometric polynomial $f(x)$, i.e.,
\begin{equation*}
  t_k=\frac{1}{2\pi}\int_{-\pi}^{\pi}f(x)e^{-i kx} \mathrm{d} x,\quad k=1-n,\dotsc,n-1.
\end{equation*}
Then, $f(x)=\sum_{k=1-n}^{n-1} t_ke^{i kx},$ is called  \emph{generating function} of $T_n$.
\end{definition}

\begin{lemma}\cite[p.\,13--15]{Chan:07} (the Grenander-Szeg\"{o} theorem)\label{lemma2.6}
Let $T_n$ be given in Definition \ref{De:gen-funct} with a generating function $f(x)$.
Then, the smallest and largest eigenvalues  $\lambda_{\min}(T_n)$ and $\lambda_{\max}(T_n)$, respectively, of $T_n$ are bounded as follows
\begin{equation*}
  f_{\min} \leqslant \lambda_{\min}(T_n) \leqslant \lambda_{\max}(T_n) \leqslant f_{\max},
\end{equation*}
with $f_{\min}$ and  $f_{\max}$  the minimum and maximum of $f(x)$, respectively.
In particular, if  $f_{\min}$ is positive, then $T_n$ is positive definite.
\end{lemma}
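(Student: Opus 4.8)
The plan is to obtain the two-sided bound from the Rayleigh--Ritz (Courant--Fischer) variational characterization of the extreme eigenvalues of the Hermitian matrix $T_n$, reducing everything to a uniform bound on a quadratic form. Here $T_n$ is Hermitian: in the paper's application it is the real symmetric band Toeplitz matrix generated by the cosine polynomial $1-\mu_1e^{-\sigma\tau}\cos x-\dots-\mu_ke^{-\sigma k\tau}\cos(kx)$, so $t_{-k}=t_k$ and $f$ is real-valued. This is exactly the situation in which $\lambda_{\min}(T_n)$ and $\lambda_{\max}(T_n)$ are real and satisfy
\begin{equation*}
\lambda_{\min}(T_n)=\min_{\xi\neq 0}\frac{\xi^{*}T_n\xi}{\xi^{*}\xi},\qquad \lambda_{\max}(T_n)=\max_{\xi\neq 0}\frac{\xi^{*}T_n\xi}{\xi^{*}\xi}.
\end{equation*}
It then suffices to squeeze the Rayleigh quotient $\xi^{*}T_n\xi/\xi^{*}\xi$ between $f_{\min}$ and $f_{\max}$ for every nonzero $\xi\in\mathbb{C}^{n}$.

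The key step is an integral representation of the quadratic form in terms of the generating function. To each vector $\xi=(\xi_1,\dots,\xi_n)^{T}$ I associate the trigonometric polynomial $p(x):=\sum_{j=1}^{n}\xi_j e^{ijx}$, whose modulus squared is $|p(x)|^{2}=\sum_{i,j=1}^{n}\xi_i\overline{\xi_j}\,e^{i(i-j)x}$. Integrating against $f$ and invoking the defining relation $t_k=\frac{1}{2\pi}\int_{-\pi}^{\pi}f(x)e^{-ikx}\,dx$ from Definition \ref{De:gen-funct}, each mode $e^{i(i-j)x}$ reproduces the single Fourier coefficient $t_{j-i}$; since the index differences $i-j$ range precisely over $1-n,\dots,n-1$, exactly the entries defining the band of $T_n$ appear and nothing else. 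A relabeling of the summation indices then yields the identity
\begin{equation*}
\frac{1}{2\pi}\int_{-\pi}^{\pi}f(x)\,|p(x)|^{2}\,dx=\sum_{i,j=1}^{n}t_{j-i}\,\xi_i\overline{\xi_j}=\xi^{*}T_n\xi,
\end{equation*}
while the orthonormality $\frac{1}{2\pi}\int_{-\pi}^{\pi}e^{i(i-j)x}\,dx=\delta_{ij}$ gives the normalization $\frac{1}{2\pi}\int_{-\pi}^{\pi}|p(x)|^{2}\,dx=\sum_{j=1}^{n}|\xi_j|^{2}=\xi^{*}\xi$.

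With these two identities the conclusion is immediate. Since $f_{\min}\leqslant f(x)\leqslant f_{\max}$ pointwise and $|p(x)|^{2}\geqslant 0$, integrating the sandwiched integrand gives $f_{\min}\,\xi^{*}\xi\leqslant\xi^{*}T_n\xi\leqslant f_{\max}\,\xi^{*}\xi$ for every $\xi$; dividing by $\xi^{*}\xi$ and taking the infimum and supremum over $\xi\neq 0$ produces $f_{\min}\leqslant\lambda_{\min}(T_n)\leqslant\lambda_{\max}(T_n)\leqslant f_{\max}$ through the variational characterization above. For the final assertion, if $f_{\min}>0$ then restricting to real $\xi\neq 0$ gives $(T_n\xi,\xi)=\xi^{T}T_n\xi\geqslant f_{\min}\,\xi^{T}\xi>0$, so $T_n$ is positive definite in the sense of Definition \ref{definition2.7} (consistently with Lemma \ref{lemma2.4}, since the eigenvalues are then bounded below by $f_{\min}>0$). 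I expect the only genuine obstacle to be the bookkeeping in the integral identity --- correctly matching the exponents appearing in $|p(x)|^{2}$ against the Fourier-coefficient definition of $t_k$, and verifying that the admissible index range coincides exactly with the Toeplitz band; once this is arranged, the pointwise control of $f$ together with Parseval's identity finishes the argument.
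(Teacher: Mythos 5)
Your proof is correct. The paper itself does not prove this lemma --- it imports it verbatim from the cited reference \cite[p.\,13--15]{Chan:07} --- and your argument (Rayleigh--Ritz plus the identity $\xi^{*}T_n\xi=\frac{1}{2\pi}\int_{-\pi}^{\pi}f(x)|p(x)|^{2}\,dx$ with $p(x)=\sum_j\xi_je^{ijx}$, then pointwise sandwiching of $f$) is exactly the standard proof given there. The only cosmetic remark is that Hermitian symmetry of $T_n$ need not be borrowed from the paper's application: it already follows from $f$ being real-valued, since then $t_{-k}=\overline{t_k}$.
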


\subsection{Checking the positivity property (P) in space direction}
In this subsection, we   prove the positivity property \eqref{pos-prop} in space direction.
\begin{lemma}\label{lemma3.1}
For any positive integer $N$, it holds that
\begin{equation*}
\sum_{n=1}^{N}\left\langle w^n,w^n-\sum_{j=1}^k \mu_j e^{-\sigma j \tau}w^{n-j}\right\rangle\geqslant c_k\sum_{n=1}^{N}\|w^n\|^2,~k=3,4,5,6,
\end{equation*}
where $\left( c_3,c_4,c_5,c_6 \right)=\left(\frac{1}{2},\frac{1}{2},\frac{1}{4},\frac{1}{24}  \right)$.
\end{lemma}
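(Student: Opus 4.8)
The plan is to rewrite the summed bilinear form on the left as a single quadratic form whose matrix is a symmetric banded Toeplitz matrix, to identify its generating function with the trigonometric polynomial in the positivity property \eqref{pos-prop}, and then to extract the constant $c_k$ from the Grenander--Szeg\"o theorem (Lemma \ref{lemma2.6}).

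First I would introduce $y^n := A^{1/2} w^n \in H$, so that $\langle w^n, w^m\rangle = (y^n, y^m)$ and $\|w^n\|^2 = |y^n|^2$, together with the convention $w^m = 0$ (hence $y^m=0$) for $m \leqslant 0$ fixed by the starting values. Expanding the inner product, the left-hand side becomes
\begin{equation*}
S_N := \sum_{n=1}^{N} |y^n|^2 - \sum_{j=1}^{k}\mu_j e^{-\sigma j \tau}\sum_{n=1}^{N}(y^n, y^{n-j}).
\end{equation*}
Using $(y^n, y^{n-j}) = (y^{n-j}, y^n)$ to symmetrize each band, I would recognize $S_N = \sum_{n,m=1}^{N} (T_N)_{nm}(y^n,y^m)$, where $T_N$ is the real symmetric $N\times N$ Toeplitz matrix with diagonal entries $t_0 = 1$ and $j$-th off-diagonal entries $t_j = t_{-j} = -\tfrac12 \mu_j e^{-\sigma j \tau}$ for $1\leqslant j\leqslant k$, all further bands being zero. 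By Definition \ref{De:gen-funct} its generating function is
\begin{equation*}
f(x) = t_0 + 2\sum_{j=1}^{k} t_j \cos(jx) = 1 - \sum_{j=1}^{k}\mu_j e^{-\sigma j \tau}\cos(jx),
\end{equation*}
which is exactly the polynomial occurring in \eqref{pos-prop}.

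The passage from scalar to Hilbert-space-valued entries is harmless: diagonalizing $T_N = Q^{T}\Lambda Q$ with $Q$ orthogonal and setting $\tilde y^\ell := \sum_n Q_{\ell n} y^n$, one gets $S_N = \sum_\ell \lambda_\ell |\tilde y^\ell|^2$ and $\sum_\ell |\tilde y^\ell|^2 = \sum_n |y^n|^2$, whence $S_N \geqslant \lambda_{\min}(T_N)\sum_n \|w^n\|^2$. Lemma \ref{lemma2.6} then yields $\lambda_{\min}(T_N) \geqslant f_{\min} := \min_{x}f(x)$, and the positivity property \eqref{pos-prop} guarantees $f_{\min} > 0$. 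It therefore remains only to prove the quantitative bound $f_{\min}\geqslant c_k$, uniformly in $\sigma\tau\geqslant 0$.

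Writing $\rho := e^{-\sigma\tau}\in(0,1]$ and $c:=\cos x\in[-1,1]$ and using $\cos 2x = 2c^2-1$, $\cos 3x = 4c^3-3c$, the cases $k=3,4$ are immediate, since $f = 1 - \tfrac{\rho}{2}c \geqslant 1 - \tfrac{\rho}{2}\geqslant \tfrac12$. For $k=5$ the substitution gives $f = \tfrac{\rho^2}{2}\bigl(c-\tfrac1\rho\bigr)^2 + \tfrac12 - \tfrac{\rho^2}{4}$; the vertex $c=1/\rho\geqslant 1$ lies outside $[-1,1]$, so the minimum is attained at $c=1$ and equals $\bigl(1-\tfrac{\rho}{2}\bigr)^2\geqslant \tfrac14$. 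The genuinely delicate case, and the step I expect to be the main obstacle, is $k=6$, where
\begin{equation*}
f = 1 - \tfrac23\rho^2 + \Bigl(\tfrac{3}{10}\rho^3 - \tfrac{43}{30}\rho\Bigr)c + \tfrac43\rho^2 c^2 - \tfrac25\rho^3 c^3
\end{equation*}
is a cubic in $c$ with $\rho$-dependent coefficients of mixed sign, so its minimum over $[-1,1]\times(0,1]$ cannot be read off at a single point. Here I would locate the interior critical points as roots of the $\rho$-dependent quadratic equation $\partial_c f = 0$, compare them with the endpoints $c=\pm1$, and show that the resulting per-$\rho$ minimum is monotone so that the global minimum is attained at $\rho=1$; a final one-variable estimate of the cubic at $\rho=1$ then gives $f_{\min}>\tfrac1{24}$, establishing the clean (non-sharp) constant $c_6=\tfrac1{24}$.
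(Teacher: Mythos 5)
Your overall route is the same as the paper's: rewrite the sum as a quadratic form in $A^{1/2}w^n$ with a banded symmetric Toeplitz matrix, identify its generating function with the trigonometric polynomial in \eqref{pos-prop}, and invoke the Grenander--Szeg\"o theorem; whether one subtracts $c_k$ from the diagonal before applying the theorem (as the paper does, via its choice of $\mu_0$) or proves $f_{\min}\geqslant c_k$ afterwards (as you do) is immaterial, and your treatment of $k=3,4$ and $k=5$ is complete and gives exactly the stated constants. The one real difference is $k=6$, where you stop at a plan rather than a proof, and the plan as stated is harder than necessary: the quantity you propose to track, $\min_{c\in[-1,1]}f(\rho,c)$, is not obviously monotone in $\rho$ because $f$ itself is not pointwise monotone in $\rho$ (for instance $\partial_\rho f>0$ at $c=-1$), so justifying that the global minimum sits at $\rho=1$ would force you into an envelope-type argument at the $\rho$-dependent critical point of a cubic --- precisely the computation one wants to avoid. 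The paper sidesteps this with a relaxation: writing $f$ in terms of $\xi=e^{-\sigma\tau}\cos x\in[-1,1]$ and $\lambda=e^{-2\sigma\tau}\in(0,1]$ and treating them as \emph{independent} variables, one observes $\partial_\lambda f=\tfrac{3}{10}\xi-\tfrac{2}{3}<0$, sets $\lambda=1$, and is left with the single cubic $-\tfrac{2}{5}\xi^3+\tfrac{4}{3}\xi^2-\tfrac{17}{15}\xi+\tfrac{1}{3}$ on $[-1,1]$, whose minimum, attained at $\xi^\star=(20-\sqrt{94})/18$, is about $0.0465>\tfrac{1}{24}$. I would replace your two-variable minimization by this one-line monotonicity observation; with that substitution your argument is complete.
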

\begin{proof}
We prove the desired results by the following three cases.

Case $1$: Let $k=3,4$. With this notation $\mu_0:=-1/2$, $\mu_1:=1/2$,
$\mu_2=\mu_3=\mu_4=0$, we have
\begin{equation*}
\begin{split}
\sum_{n=1}^{N}\left\langle w^n,w^n-\mu_1e^{-\sigma \tau} w^{n-1}\right\rangle=\frac{1}{2}\sum_{n=1}^{N}\|w^n\|^2+\sum_{i,j=1}^N \ell_{i,j}\left\langle w^i,w^j\right\rangle.
\end{split}
\end{equation*}
Here the lower triangular Toeplitz matrix $L=(\ell_{ij})\in \Re^{N,N}$  with entries
\[\ell_{i,i-j}=-\mu_je^{-\sigma j \tau}, \quad j=0,1, \quad i=j+1,\dotsc,N,\]
and all other entries equal zero.  From Definition \ref{De:gen-funct}, we know that the generating function of
$(L+L^T)/2$ is
\begin{equation*}
\begin{split}
f(x)=\frac{1}{2}\left(1-e^{-\sigma \tau}\cos x\right) \quad \forall x \in \Re.
\end{split}
\end{equation*}
Using Lemma \ref{lemma2.4} and  \ref{lemma2.6}, it implies that $L$ is semipositive definite.  Then we have
\begin{equation*}
\begin{split}
\sum_{n=1}^{N}\left\langle w^n,w^n-\mu_1 e^{-\sigma \tau}w^{n-1}\right\rangle\geqslant\frac{1}{2}\sum_{n=1}^{N}\|w^n\|^2.
\end{split}
\end{equation*}

Case $2$: Let $k=5$. With this notation $\mu_0:=-3/4$, $\mu_1:=1$, $\mu_2:=-1/4$, $\mu_3=\mu_4=\mu_5=0$, we obtain
\begin{equation*}
\begin{split}
\sum_{n=1}^{N}\left\langle w^n,w^n-\mu_1 e^{-\sigma \tau}w^{n-1}-\mu_2e^{-2\sigma \tau}w^{n-2}\right\rangle=\frac{1}{4}\sum_{n=1}^{N}\|w^n\|^2+\sum_{i,j=1}^N \ell_{i,j}\left\langle w^i,w^j\right\rangle.
\end{split}
\end{equation*}
Here, we introduce the lower triangular Toeplitz matrix $L_1=(\ell_{ij})\in \Re^{N,N}$  with entries
\[\ell_{i,i-j}=-\mu_je^{-\sigma j \tau}, \quad j=0,1,2, \quad i=j+1,\dotsc,N,\]
and all other entries equal zero.
From Definition \ref{De:gen-funct}, we know that the generating function  of $(L_1+L_1^T)/2$ is
\begin{equation*}
\begin{split}
f(x)=\frac{3}{4}-e^{-\sigma \tau}\cos x+\frac{1}{4}e^{-2\sigma \tau}\cos(2x)\geqslant\frac{1}{2}\left(1-e^{-\sigma \tau}\cos x\right)^2 \quad \forall x \in \Re.
\end{split}
\end{equation*}
Using Lemma \ref{lemma2.4} and  \ref{lemma2.6}, it implies that $L_1$ is semipositive definite.  Then we have
\begin{equation*}
\begin{split}
\sum_{n=1}^{N}\left\langle w^n,w^n-\mu_1e^{-\sigma \tau} w^{n-1}-\mu_2e^{-2\sigma \tau}w^{n-2}\right\rangle\geqslant\frac{1}{4}\sum_{n=1}^{N}\|w^n\|^2.
\end{split}
\end{equation*}

Case $3$: Let $k=6$. With this notation $\mu_0:=-23/24$, $\mu_1:=43/30$, $\mu_2:=-2/3$, $\mu_3:=1/10$, $\mu_4=\mu_5=\mu_6=0$, it yields
\begin{equation*}
\begin{split}
&\sum_{n=1}^{N}\left\langle w^n,w^n-\mu_1e^{-\sigma  \tau} w^{n-1}-\mu_2e^{-2\sigma \tau}w^{n-2}-\mu_3e^{-3\sigma \tau}w^{n-3}\right\rangle\\
&=\frac{1}{24}\sum_{n=1}^{N}\|w^n\|^2+\sum_{i,j=1}^N \ell_{i,j}\left\langle w^i,w^j\right\rangle.
\end{split}
\end{equation*}
To this end, we introduce the lower triangular Toeplitz matrix $L_2=(\ell_{ij})\in \Re^{N,N}$  with entries
\[\ell_{i,i-j}=-\mu_je^{-\sigma j\tau}, \quad j=0,1,2,3, \quad i=j+1,\dotsc,N,\]
and all other entries equal zero.
According to Definition \ref{De:gen-funct},  the generating function of $(L_2+L_2^T)/2$ is
\begin{equation*}
\begin{split}
f(x)&=\frac{23}{24} -\frac{43}{30}e^{-\sigma \tau}\cos x+\frac{2}{3}e^{-2\sigma\tau}\cos(2x)-\frac{1}{10}e^{-3\sigma \tau}\cos(3x)\\
&=-\frac{2}{5}\left(e^{-\sigma \tau}\cos x\right)^3+\frac{4}{3}\left(e^{-\sigma \tau}\cos x\right)^2-\frac{43}{30}e^{-\sigma \tau}\cos x+\frac{3}{10}e^{-3\sigma \tau}\cos x\\
&\quad-\frac{2}{3}e^{-2\sigma \tau}+\frac{23}{24}\quad \forall x \in \Re.
\end{split}
\end{equation*}
Let $\xi=e^{-\sigma \tau}\cos x$, $\lambda=e^{-2\sigma \tau}$, it leads to
\begin{equation*}
\begin{split}
f(x)=-\frac{2}{5}\xi^3+\frac{4}{3}\xi^2-\frac{43}{30}\xi+\frac{3}{10}\lambda\xi-\frac{2}{3}\lambda+\frac{23}{24},\quad \xi\in [-1,1], \quad \lambda\in (0,1].
\end{split}
\end{equation*}
Clearly, $f(x)$ is decreasing with respect to $\lambda$, it yields
\begin{equation*}
\begin{split}
f(x)\geqslant-\frac{2}{5}\xi^3+\frac{4}{3}\xi^2-\frac{17}{15}\xi+\frac{7}{24},\quad \xi\in [-1,1].
\end{split}
\end{equation*}
Hence, we consider the polynomial $p,$
\begin{equation*}
p(\xi):=-\frac{2}{5}\xi^3+\frac{4}{3}\xi^2-\frac{17}{15}\xi+\frac{7}{24},\quad \xi\in [-1,1].
\end{equation*}
It is easily seen that $p$ attains its minimum at $\xi^\star=(20-\sqrt{94})/18$ 
and
\[p(\xi^\star)>0.004785 > 0.\]
%
Using Lemma \ref{lemma2.4} and  \ref{lemma2.6}, it implies that $L_2$ is positive definite.  Then we obtain
\begin{equation*}
\begin{split}
\sum_{n=1}^{N}\left\langle w^n,w^n-\mu_1e^{-\sigma \tau} w^{n-1}-\mu_2e^{-2\sigma \tau}w^{n-2}-\mu_3e^{-3\sigma \tau}w^{n-3}\right\rangle\geqslant\frac{1}{24}\sum_{n=1}^{N}\|w^n\|^2.
\end{split}
\end{equation*}
The proof is completed.
\end{proof}

\subsection{Checking the A-stability property (A) in time direction}
The G-stability theory or the  equivalence of A-stability and G-stability are used to analyze the stability of BDF methods for parabolic equation.
However, it is not easy to extend to subdiffusion equation.
In this subsection, we prove the A-stability property \eqref{A} in time direction for subdiffusion equation.
It is well-known that  BDF$k$ ($k\geqslant 3$) is not  $A$-stable \cite[p.251]{HW:10}, which underlies the main technical difficulty in carrying out a rigorous stability  analysis  by the energy technique.
Based on the generating power series $g(\zeta)$ in \eqref{2.01}, we construct a novel generating power series
\begin{equation}\label{3.001}
q(\zeta)=g(\zeta)\Big/\mu(\zeta)=\sum_{j=0}^\infty q_j^k\zeta^j,
\end{equation}
where $q_j^k$ is given in \eqref{2.6}-\eqref{2.8}.
According to \cite{CD:18,Ji:15,Xu:11} and Definition \ref{De:As}, we have the following results.
\begin{lemma}\label{lemma2.7}
Let $\{ q_j^k\}_{j=0}^\infty$ be a sequence of real numbers such that $q(\zeta)=\sum_{j=0}^\infty q_j^k\zeta^j$ is analytic in the unit disk $S=\{\zeta \in \mathbb{C}: |\zeta|\leqslant 1\}$.
Then for any positive integer $N$ and for any $\left(v^1,\ldots,v^N\right)$
\begin{equation*}
\begin{split}
\sum_{n=1}^N\left(\sum_{j=0}^{n-1} q_j^k v^{n-j},v^n\right)\geqslant 0,
\end{split}
\end{equation*}
if and only if
$\Real q(\zeta)\geqslant 0$, if and only if $\arg \left(q(\zeta)\right)\in\left[-\frac{\pi}{2},\frac{\pi}{2}\right]$.
\end{lemma}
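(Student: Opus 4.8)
The plan is to read the bilinear sum as a quadratic form governed by a lower–triangular Toeplitz matrix, and then to identify its symmetric part with a Toeplitz matrix whose generating function is exactly $\Real q$. Setting $m=n-j$, the left-hand sum becomes $\sum_{n=1}^N\sum_{m=1}^{n}q_{n-m}^k\left(v^{m},v^{n}\right)=\sum_{n,m}(Q_N)_{n,m}\left(v^{m},v^{n}\right)$, where $(Q_N)_{n,m}=q_{n-m}^k$ for $n\geqslant m$ and $0$ otherwise. Since $(\cdot,\cdot)$ is symmetric, only the symmetric part $H_N:=\tfrac12(Q_N+Q_N^{T})$ contributes. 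This $H_N$ is a symmetric Toeplitz matrix with diagonal $q_0^k$ and $j$-th off-diagonal $\tfrac12 q_{j}^k$, so, using that every $q_j^k$ is real, its generating function in the sense of Definition \ref{De:gen-funct} is $f(x)=q_0^k+\sum_{j\geqslant1}q_j^k\cos(jx)=\Real q(e^{ix})$. This single identity is the bridge linking the three conditions, and it is the first thing I would establish.

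For the implication actually needed in the stability analysis, I would suppose $\Real q(\zeta)\geqslant 0$ on the closed disk, so in particular $f_{\min}=\min_x\Real q(e^{ix})\geqslant 0$. By the Grenander--Szeg\"o theorem (Lemma \ref{lemma2.6}) each $H_N$ then has $\lambda_{\min}(H_N)\geqslant f_{\min}\geqslant0$, hence is symmetric positive semidefinite by Lemma \ref{lemma2.4}. To lift this from scalars to the $H$-valued vectors $v^{n}$, I would diagonalize $H_N=\sum_{\ell}\lambda_\ell\,\phi_\ell\phi_\ell^{T}$ with $\lambda_\ell\geqslant0$ and write
\[
\sum_{n,m}(H_N)_{n,m}\left(v^{m},v^{n}\right)=\sum_{\ell}\lambda_\ell\Bigl|\sum_{n}(\phi_\ell)_n\, v^{n}\Bigr|^{2}\geqslant0,
\]
which is the asserted inequality.

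For the converse I would assume the sum is nonnegative for all $(v^1,\dots,v^N)$. Testing with $v^{n}=c_n e$ for a fixed unit vector $e\in H$ and arbitrary real $c_n$ forces $\mathbf c^{T}H_N\mathbf c\geqslant0$, so $H_N$ is positive semidefinite and $\lambda_{\min}(H_N)\geqslant0$ for every $N$. Because $H_N$ is a leading principal submatrix of $H_{N+1}$, Cauchy interlacing gives a nonincreasing sequence $\lambda_{\min}(H_N)$, and testing with truncated Fourier modes $c_n=N^{-1/2}e^{inx_\star}$ (where $x_\star$ realizes the minimum of $f$, splitting into real and imaginary parts since $H_N$ is real) shows $\lambda_{\min}(H_N)\to f_{\min}$. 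Hence $f_{\min}\geqslant0$, i.e. $\Real q(e^{ix})\geqslant0$ for all $x$; since $\Real q$ is harmonic in the open disk and continuous up to the boundary, the minimum principle for harmonic functions (already invoked after Definition \ref{De:As}) extends this to $\Real q(\zeta)\geqslant0$ for all $|\zeta|\leqslant1$. The last equivalence, $\Real q(\zeta)\geqslant0\Leftrightarrow\arg\left(q(\zeta)\right)\in[-\tfrac\pi2,\tfrac\pi2]$, is immediate from the geometric meaning of the argument.

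The hard part will be the backward direction. Lemma \ref{lemma2.6} supplies only the one-sided bound $f_{\min}\leqslant\lambda_{\min}(H_N)$, which is precisely what the forward implication needs but is the wrong inequality for recovering $\Real q\geqslant0$ from positive semidefiniteness of every $H_N$. Closing this gap requires the complementary, asymptotic content of Szeg\"o's theory, namely $\lambda_{\min}(H_N)\to f_{\min}$, which I would realize through the explicit near-null test vectors above together with care that the space is real, so complex test modes must be decomposed into real and imaginary components. Since only the forward implication enters the stability estimates in Section \ref{Se:stab}, it is the construction of these extremal test vectors, rather than the clean Grenander--Szeg\"o bound, where the genuine work lies.
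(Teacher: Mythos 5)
The paper never proves Lemma \ref{lemma2.7}: it is quoted as a known result and attributed to \cite{CD:18,Ji:15,Xu:11}, so your proposal supplies an argument the authors omit. Your overall strategy --- symmetrize the lower-triangular Toeplitz form, identify the symmetric part $H_N$ with the Toeplitz sections built from the cosine series of $\Real q(e^{ix})$, and read off positive semidefiniteness --- is the standard route to this classical equivalence and is sound in outline. The final equivalence $\Real q\geqslant 0\Leftrightarrow\arg q\in[-\tfrac{\pi}{2},\tfrac{\pi}{2}]$, the passage from the boundary circle to the closed disk by the minimum principle for the harmonic function $\Real q$, and the rank-one decomposition lifting the scalar statement to the $H$-valued vectors $v^n$ are all fine.

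There is, however, one step you must repair. The generating function of $H_N$ \emph{in the sense of Definition \ref{De:gen-funct}} is the trigonometric \emph{polynomial} $f_N(x)=q_0^k+\sum_{j=1}^{N-1}q_j^k\cos(jx)$, not the full series $\Real q(e^{ix})$; your displayed identity silently replaces the former by the latter. This matters: partial sums of the Fourier series of a nonnegative function need not be nonnegative, so Lemma \ref{lemma2.6} applied to $f_N$ only yields $\lambda_{\min}(H_N)\geqslant\min_x f_N(x)$, which can be negative even when $\Real q\geqslant 0$ everywhere. The clean fix is to bypass the eigenvalue bound and use the exact integral representation
\begin{equation*}
\mathbf{c}^{T}H_N\mathbf{c}=\frac{1}{2\pi}\int_{-\pi}^{\pi}\Real q\left(e^{ix}\right)\Bigl|\sum_{n=1}^{N}c_n e^{inx}\Bigr|^{2}\,dx,
\end{equation*}
which is valid because the entries of $H_N$ are precisely the Fourier coefficients of $\Real q(e^{ix})$ of order at most $N-1$, and the analyticity of $q$ on the closed disk makes the coefficient series absolutely convergent. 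This gives the forward implication at once, and the same formula evaluated on your test vectors $c_n=N^{-1/2}e^{inx_\star}$ yields $\mathbf{c}^{*}H_N\mathbf{c}\to\Real q(e^{ix_\star})$ by Fej\'er summation, closing the converse exactly as you describe. With that substitution your proof is complete and, unlike the paper, self-contained.
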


Corresponding, the $k$-step schemes described by the pair $(g,\mu)$ are all  $A$-stable, see the following  Lemmas \ref{lemma4.1}-\ref{lemma4.003}.

\begin{lemma} (BDF3)\label{lemma4.1}
Let $q_j^k$ be defined by (\ref{2.6}). Then for any positive integer $N$, it holds that
\begin{equation*}
\begin{split}
\sum_{n=1}^N\left(\sum_{j=0}^{n-1} q_j^k v^{n-j},v^n\right)\geqslant 0.
\end{split}
\end{equation*}
\end{lemma}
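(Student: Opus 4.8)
The strategy is to pass from the bilinear summation inequality to a pointwise positivity statement for the generating function, and then to reduce that statement, via a phase (argument) analysis on the unit circle, to the A-stability of the corrected BDF3 pair at the integer order $\alpha=1$. By Lemma~\ref{lemma2.7}, the claimed inequality holds for every $N$ and every $(v^1,\dots,v^N)$ if and only if $\Real\,q(\zeta)\geqslant0$ for $|\zeta|\leqslant1$, where $q(\zeta)=g(\zeta)/\mu(\zeta)=\sum_{j\geqslant0}q_j^k\zeta^j$ is the series \eqref{3.001}; for BDF3 Table~\ref{table:2} gives $\mu(\zeta)=1-\tfrac12e^{-\sigma\tau}\zeta$. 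Thus it suffices to verify the A-stability property \eqref{A} for this pair $(g,\mu)$.

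First I would observe that $q$ depends on $\zeta$ only through $w:=e^{-\sigma\tau}\zeta$: with $\delta(w):=(1-w)+\tfrac12(1-w)^2+\tfrac13(1-w)^3=\tfrac{11}{6}-3w+\tfrac32w^2-\tfrac13w^3$ we have $g(\zeta)=\delta(w)^\alpha$ and $\mu(\zeta)=1-\tfrac12w$, so $q(\zeta)=Q(w):=\delta(w)^\alpha/(1-\tfrac12w)$. Since $\sigma\geqslant0$, the map $\zeta\mapsto w$ carries the closed unit disk into $\{|w|\leqslant e^{-\sigma\tau}\leqslant1\}$, so it is enough to prove $\Real\,Q(w)\geqslant0$ for $|w|\leqslant1$. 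Factoring $\delta(w)=(1-w)\bigl(1+\tfrac12(1-w)+\tfrac13(1-w)^2\bigr)$ one checks that the quadratic factor has both roots in $|w|>1$, so $\delta$ is zero-free on the open disk, while $1-\tfrac12w$ vanishes only at $w=2$; hence $Q$ is holomorphic on $|w|<1$ with the single-valued branch of $\delta^\alpha$ normalized by $\delta(0)=\tfrac{11}{6}>0$. By the maximum principle for the harmonic function $\Real\,Q$ (exactly as invoked after Definition~\ref{De:As}) it remains to check $\Real\,Q\geqslant0$ on $|w|=1$, i.e. the argument bound $-\tfrac\pi2\leqslant\alpha\arg\delta(w)-\arg(1-\tfrac12w)\leqslant\tfrac\pi2$.

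On the circle I would write $w=e^{i\theta}$, use $1-e^{i\theta}=2\sin(\theta/2)\,e^{i(\theta-\pi)/2}$, and reduce to $\theta\in[0,\pi]$ by conjugation. The argument bound then follows for every $\alpha\in(0,1)$ from three facts about the integer-order data: (a) $\Im\,\delta(e^{i\theta})\leqslant0$, whence $\arg\delta\in[-\pi,0]$; (b) $\Real(1-\tfrac12e^{i\theta})=1-\tfrac12\cos\theta>0$, whence $\arg(1-\tfrac12e^{i\theta})\in(-\tfrac\pi2,0]$; and (c) $\Real\bigl(\delta(e^{i\theta})/\mu(e^{i\theta})\bigr)\geqslant0$, i.e. $\arg\delta-\arg\mu\geqslant-\tfrac\pi2$, which is the A-stability of the multiplier-corrected BDF3 scheme. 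Indeed, granting (a)--(c): the upper bound is immediate since $\alpha\arg\delta\leqslant0$ and $-\arg\mu<\tfrac\pi2$; and because $\arg\delta\leqslant0$ and $0<\alpha<1$ we have $\alpha\arg\delta\geqslant\arg\delta$, so $\alpha\arg\delta-\arg\mu\geqslant\arg\delta-\arg\mu\geqslant-\tfrac\pi2$. Thus the fractional exponent only helps, and the entire problem collapses onto the classical estimate (c).

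The main obstacle is therefore (c), and its delicate region is $\theta\to0$, the ``consistency'' point $w\to1$ where BDF3 fails to be A-stable. A short Taylor expansion shows $\Real\,\delta(e^{i\theta})\sim-\tfrac14\theta^4$ while $\Im\,\delta(e^{i\theta})\sim-\theta$, so $\delta$ pokes into the third quadrant and $\arg\delta=-\tfrac\pi2-\tfrac14\theta^3+o(\theta^3)$ dips just below $-\tfrac\pi2$; the unmodified polynomial $\delta$ alone (multiplier $\equiv1$) would already violate positivity here. What rescues (c) is that the multiplier contributes the linear phase $\arg\mu(e^{i\theta})\sim-\theta$, so $\arg\delta-\arg\mu\sim-\tfrac\pi2+\theta$ stays above $-\tfrac\pi2$; the cubic dip is beaten by the linear gain. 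To turn this heuristic into a proof I would clear denominators and reduce (c) to the nonnegativity of $\Real\bigl(\delta(e^{i\theta})\overline{\mu(e^{i\theta})}\bigr)$, a cosine polynomial that, on substituting $c=\cos\theta$, becomes an explicit polynomial in $c\in[-1,1]$ vanishing to the right order at $c=1$; verifying that this polynomial is nonnegative on $[-1,1]$ (and confirming the sign condition (a) over the whole interval) is the one genuinely computational step, and it is where the argument must be made airtight.
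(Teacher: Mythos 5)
Your proposal is correct, and its backbone coincides with the paper's: reduce via Lemma~\ref{lemma2.7} to $\Real q(\zeta)\geqslant 0$, pass to the unit circle $z=e^{ix}$, split the argument of $q$ into the phase of the BDF3 symbol $\delta(z)=\tfrac{11}{6}-3z+\tfrac32z^2-\tfrac13z^3$ and the phase of $\mu$, and exploit that the symbol's phase is nonpositive, so raising it to the power $\alpha\in(0,1)$ can only increase it; hence the worst case is $\alpha=1$ and everything collapses to the integer-order A-stability of the pair $(\delta,\mu)$. Where you genuinely diverge is the endgame. The paper factors $\delta(z)=(1-z)\left(\tfrac{11}{6}-\tfrac{7}{6}z+\tfrac{2}{6}z^2\right)$, writes the total phase as $\theta_1+\theta_2+\theta_3$, and proves the lower bound $-\tfrac{\pi}{2}$ by showing this function of $x$ is \emph{increasing} (its derivative is governed by the cubic $h(y)=-88y^3+262y^2-230y+65$, positive on $[-1,1]$), so the minimum is attained at $x=0$. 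You instead verify the integer-order statement as a polynomial inequality in $c=\cos x$, which is arguably cleaner and fully closes: one finds $3\,\Real\bigl(\delta(e^{ix})\overline{\mu(e^{ix})}\bigr)=-4c^3+10c^2-11c+5=(1-c)(4c^2-6c+5)\geqslant 0$ since the quadratic has negative discriminant, and your claim (a) holds because $\operatorname{Im}\delta(e^{ix})=\sin x\left(-\tfrac43c^2+3c-\tfrac83\right)\leqslant 0$ on $[0,\pi]$, again by a negative discriminant. These two factorizations are exactly the computations you flagged but did not execute, so you should carry them out to make the argument airtight; note also that your monotonicity step $\alpha\arg\delta\geqslant\arg\delta$ requires $\arg\delta\in[-\pi,0]$ for a continuous branch, which is precisely what (a) supplies and what the paper secures instead by bounding $\theta_1$ and $\theta_2$ separately in $[-\tfrac{\pi}{2},0]$. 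Your local expansion at $x\to 0$ (the $-\tfrac14x^4$ dip of $\Real\delta$ beaten by the linear phase of $\mu$) is a correct and illuminating explanation of why the multiplier $\mu_1=\tfrac12$ is needed, though it is the global factorization, not the expansion, that constitutes the proof.
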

\begin{proof}
From \eqref{2.6} and \eqref{3.001}, we can check that
\begin{equation*}
\begin{split}
q(\zeta)=\frac{\left(\frac{11}{6}-3e^{-\sigma\tau}\zeta+\frac{3}{2}(e^{-\sigma\tau}\zeta)^2-\frac{1}{3}(e^{-\sigma\tau}\zeta)^3\right)^{\alpha}}{1-\frac{1}{2}e^{-\sigma\tau}\zeta}.
\end{split}
\end{equation*}
Taking $z=e^{-\sigma\tau}\zeta$, it leads to
\begin{equation*}
\begin{split}
\frac{\left(\frac{11}{6}-3z+\frac{3}{2}z^2-\frac{1}{3}z^3\right)^{\alpha}}{1-\frac{1}{2}z}=
\frac{\left(1-z\right)^{\alpha}\left(\frac{11}{6}-\frac{7}{6}z+\frac{2}{6}z^2\right)^{\alpha}}{1-\frac{1}{2}z}.
\end{split}
\end{equation*}

Next we apply  the  Grenander-Szeg\"{o} theorem  to obtain the desired result.
Let  $z=e^{ix}$. Here  we just need to consider its principal value on $x\in [0,\pi]$, since $x\in [\pi,2\pi]$ can be similarly discussed.
Then
\begin{equation*}
\begin{split}
\left(1-z\right)^{\alpha}=\left(2\sin\frac{x}{2}\right)^{\alpha}e^{i{\alpha}\theta_1}
\end{split}
\end{equation*}
with $\theta_1=\arctan\frac{-\sin (x)}{1-\cos x}=\frac{x-\pi}{2}\leqslant 0$.
\begin{equation*}
\begin{split}
\left(\frac{11}{6}-\frac{7}{6}z+\frac{2}{6}z^2\right)^{\alpha}=\left(a_3-ib_3\right)^{\alpha}=\left(a_3^2+b_3^2\right)^{\frac{\alpha}{2}}e^{i{\alpha}\theta_2},
\end{split}
\end{equation*}
where
\begin{equation*}
\begin{split}
a_3&=\frac{1}{6}\left(11-7\cos x+2\cos(2x)\right)>0,~ b_3=\frac{1}{6}\left(7\sin x-2\sin(2x)\right)\geqslant 0, \\ \theta_2&=\arctan\frac{-\left(7\sin x-2\sin(2x)\right)}{11-7\cos x+2\cos(2x)}\leqslant 0.
\end{split}
\end{equation*}
Moreover, we get
\begin{equation*}
\begin{split}
\frac{1}{1-\frac{1}{2}z}=\frac{1}{\sqrt{\frac{5}{4}-\cos x}}e^{i\theta_3},~~\theta_3=\arctan\frac{\frac{1}{2}\sin (x)}{1-\frac{1}{2}\cos x}\leqslant \arctan1= \frac{\pi}{4}.
\end{split}
\end{equation*}
From Lemma \ref{lemma2.7}, we need to prove
\begin{equation*}
\begin{split}
\Real\left\{\frac{\left(\frac{11}{6}-3z+\frac{3}{2}z^2-\frac{1}{3}z^3\right)^{\alpha}}{1-\frac{1}{2}z}\right\}\geqslant0,
\end{split}
\end{equation*}
which is equal to prove
\begin{equation*}
\begin{split}
\arg\left\{\frac{\left(\frac{11}{6}-3z+\frac{3}{2}z^2-\frac{1}{3}z^3\right)^{\alpha}}{1-\frac{1}{2}z}\right\}\in\left[-\frac{\pi}{2},\frac{\pi}{2}\right].
\end{split}
\end{equation*}
According to the above equations, we have
\begin{equation*}
\begin{split}
&\arg\left\{\frac{\left(\frac{11}{6}-3z+\frac{3}{2}z^2-\frac{1}{3}z^3\right)^{\alpha}}{1-\frac{1}{2}z}\right\}\\
&=\arg\left\{\left(1-z\right)^{\alpha}\right\}+\arg\left\{\left(\frac{11}{6}-\frac{7}{6}z+\frac{2}{6}z^2\right)^{\alpha}\right\}
+\arg\left\{\frac{1}{1-\frac{1}{2}z}\right\}\\&=\alpha \theta_1+\alpha \theta_2+\theta_3.
\end{split}
\end{equation*}
Since $\alpha \theta_1+\alpha \theta_2+\theta_3\leqslant \theta_3\leqslant  \frac{\pi}{4}$ and $\alpha \theta_1+\alpha \theta_2+\theta_3\geqslant \theta_1+\theta_2+\theta_3$.
Next we just need to prove $g(x)=\left(\theta_1+\theta_2+\theta_3\right)(x)\geqslant -\frac{\pi}{2}$.
With $y:=\cos x,$ it yields
\begin{equation*}
\begin{split}
g'(x)=\frac{4}{\left(88y^2-182y+130\right)\left(5-4y\right)}h(y)
\end{split}
\end{equation*}
with $h(y)=-88y^3+262y^2-230y+65,~y\in [-1,1]$.
It is easily seen that $h$ attains its minimum at $y^\star=(131-\sqrt{1981})/132$ 
and
\[h(y^\star)> 2.02 > 0.\]
Moreover, combining with $\left(88y^2-182y+130\right)\left(5-4y\right)>0$, it implies that $g'(x)$ is positive, $g(x)\geqslant g(0)=-\frac{\pi}{2}$.
The proof is completed.
\end{proof}

\begin{lemma}(BDF4) \label{lemma4.001}
Let $q_j^k$ be defined by (\ref{2.6}). Then for any positive integer $N$, it holds that
\begin{equation*}
\begin{split}
\sum_{n=1}^N\left(\sum_{j=0}^{n-1} q_j^k v^{n-j},v^n\right)\geqslant 0.
\end{split}
\end{equation*}
\end{lemma}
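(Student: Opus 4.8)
The plan is to mirror the proof of Lemma~\ref{lemma4.1} (BDF3) step for step: since the multiplier for BDF4 is again $\mu(\zeta)=1-\tfrac12 e^{-\sigma\tau}\zeta$ (the $q_j^k$ are given by the same formula \eqref{2.6}), the denominator analysis is literally unchanged, and only the numerator differs. First I would form $q(\zeta)=g(\zeta)/\mu(\zeta)$ from \eqref{2.6} and \eqref{3.001}, and with $z=e^{-\sigma\tau}\zeta$ write the BDF4 symbol as
\begin{equation*}
q(\zeta)=\frac{\left(\frac{25}{12}-4z+3z^2-\frac{4}{3}z^3+\frac{1}{4}z^4\right)^{\alpha}}{1-\frac{1}{2}z}.
\end{equation*}
Because the quartic vanishes at $z=1$, it factors as $\frac{25}{12}-4z+3z^2-\frac43 z^3+\frac14 z^4=(1-z)\cdot\frac{1}{12}\left(25-23z+13z^2-3z^3\right)$, so the numerator splits into $(1-z)^{\alpha}$ times $\bigl(c(z)\bigr)^{\alpha}$ with $c(z)=\frac{1}{12}(25-23z+13z^2-3z^3)$.

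Next, setting $z=e^{ix}$ and (by the same symmetry argument) restricting to $x\in[0,\pi]$, I would evaluate the three arguments. The factor $(1-z)^{\alpha}=(2\sin\frac{x}{2})^{\alpha}e^{i\alpha\theta_1}$ contributes $\theta_1=\frac{x-\pi}{2}\le0$, and the denominator contributes $\theta_3=\arctan\frac{\frac12\sin x}{1-\frac12\cos x}\le\frac{\pi}{4}$, both exactly as in Lemma~\ref{lemma4.1}. Writing $c(e^{ix})=a_4-ib_4$, a Chebyshev reduction with $y=\cos x$ gives $a_4=\frac{1}{6}(-6y^3+13y^2-7y+6)$ and $b_4=\frac{1}{6}\sin x\,(6y^2-13y+10)$. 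The quadratic $6y^2-13y+10$ has negative discriminant (hence is positive), and the cubic is checked to stay positive on $[-1,1]$ by isolating its single interior critical point; thus $a_4>0$ and $b_4\ge0$ on $[0,\pi]$, and therefore $\theta_2:=\arg c(e^{ix})=-\arctan\frac{b_4}{a_4}\le0$.

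It then remains to bound $\arg q=\alpha\theta_1+\alpha\theta_2+\theta_3$. The upper bound is immediate: since $\theta_1,\theta_2\le0$ and $\alpha\in(0,1)$, we get $\arg q\le\theta_3\le\frac{\pi}{4}<\frac{\pi}{2}$. For the lower bound, the inequality $\alpha(\theta_1+\theta_2)\ge\theta_1+\theta_2$ reduces the matter to showing $g(x):=(\theta_1+\theta_2+\theta_3)(x)\ge g(0)=-\frac{\pi}{2}$. I would prove this via monotonicity, computing $g'(x)$ and, after clearing the manifestly positive denominators coming from $a_4^2+b_4^2$ and $\frac54-\cos x$, reducing $g'(x)\ge0$ to the positivity of a single polynomial in $y=\cos x$ on $[-1,1]$. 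Lemma~\ref{lemma2.7} then converts $\Real q\ge0$ into the claimed summation inequality.

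The main obstacle is this last monotonicity step. In BDF3 the derivative $g'$ collapsed to a cubic $h(y)$ whose minimum could be pinned down explicitly (via $h(y^\star)>0$); here the quartic numerator produces a higher-degree polynomial in $y$, and the delicate part is to \emph{certify} its positivity on $[-1,1]$, presumably by locating its critical points and bounding the minimal value strictly away from zero, precisely in the spirit of the $p(\xi^\star)>0$ and $h(y^\star)>0$ verifications used earlier.
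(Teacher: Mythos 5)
Your setup reproduces the paper's proof almost verbatim — the factorization of the quartic as $(1-z)\cdot\frac{1}{12}(25-23z+13z^2-3z^3)$, the three angles $\theta_1,\theta_2,\theta_3$, the signs $a_4>0$, $b_4\geqslant0$, and the upper bound $\arg q\leqslant\theta_3\leqslant\frac{\pi}{4}$ are all exactly what the paper does (your Chebyshev forms of $a_4,b_4$ agree with the paper's trigonometric ones). The gap is in the final step, and it is precisely the one you flagged as "delicate": your plan is to establish $g(x)=\theta_1+\theta_2+\theta_3\geqslant-\frac{\pi}{2}$ by proving $g'(x)\geqslant0$, i.e.\ by certifying positivity on $[-1,1]$ of the polynomial $h(y)$ appearing in the numerator of $g'$. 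That certification is impossible, because the polynomial is \emph{not} positive there. For BDF4 one finds
\begin{equation*}
g'(x)=\frac{8}{\left(-600y^3+1576y^2-1376y+544\right)\left(5-4y\right)}\,h(y),\qquad h(y)=450y^4-1601y^3+1949y^2-862y+82,
\end{equation*}
and $h$ has two roots in $(-1,1)$, roughly $y_1\approx0.1288$ and $y_2\approx0.7604$ (e.g.\ $h(0.5)=-33.75<0$ while $h(0)=82>0$ and $h(1)=18>0$). So $g$ is \emph{not} monotone on $[0,\pi]$: it increases on $(0,x_2)$, decreases on $(x_2,x_1)$, and increases on $(x_1,\pi)$, where $x_i=\arccos y_i$. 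This is a genuine structural difference from the BDF3 case, where the corresponding cubic really is positive and monotonicity does the job.

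The repair — and what the paper actually does — is to abandon monotonicity and run a sign analysis of $g'$: the only interior local minimum of $g$ is at $x^\star=x_1\approx1.4416$, and one checks numerically that $g(x^\star)>-1.37>-\frac{\pi}{2}$, while at the boundary $g(0)=-\frac{\pi}{2}$ and $g(\pi)=0$. Together these give $g(x)\geqslant-\frac{\pi}{2}$ on $[0,\pi]$, and Lemma~\ref{lemma2.7} then yields the claimed inequality. So your argument is recoverable with the same computation you propose, but the conclusion you must draw from $h$ is "locate the sign changes and bound $g$ at the interior minimum," not "show $h\geqslant0$"; as written, the monotonicity step would fail.
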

\begin{proof}
From  \eqref{2.6} and \eqref{3.001}, there exists
\begin{equation*}
\begin{split}
q(\zeta)=\frac{\left(\frac{25}{12}-4e^{-\sigma\tau}\zeta+3(e^{-\sigma\tau}\zeta)^2-\frac{4}{3}(e^{-\sigma\tau}\zeta)^3+\frac{1}{4}(e^{-\sigma\tau}\zeta)^4\right)^{\alpha}}{1-\frac{1}{2}e^{-\sigma\tau}\zeta}.
\end{split}
\end{equation*}
Taking $z=e^{-\sigma\tau}\zeta$, it leads to
\begin{equation*}
\begin{split}
\frac{\left(\frac{25}{12}-4z+3z^2-\frac{4}{3}z^3+\frac{1}{4}z^4\right)^{\alpha}}{1-\frac{1}{2}z}=
\frac{\left(1-z\right)^{\alpha}\left(\frac{25}{12}-\frac{23}{12}z+\frac{13}{12}z^2-\frac{1}{4}z^3\right)^{\alpha}}{1-\frac{1}{2}z}.
\end{split}
\end{equation*}

Next we apply  the  Grenander-Szeg\"{o} theorem  to obtain the desired result.
Let  $z=e^{ix}$. Here  we just need to consider its principal value on $x\in [0,\pi]$.
It is easy to compute that
\begin{equation*}
\begin{split}
\left(1-z\right)^{\alpha}=\left(2\sin\frac{x}{2}\right)^{\alpha}e^{i{\alpha}\theta_1}
\end{split}
\end{equation*}
with $\theta_1=\arctan\frac{-\sin (x)}{1-\cos x}=\frac{x-\pi}{2}\leqslant 0$; and
\begin{equation*}
\begin{split}
\left(\frac{25}{12}-\frac{23}{12}z+\frac{13}{12}z^2-\frac{1}{4}z^3\right)^{\alpha}=\left(a_4-ib_4\right)^{\alpha}=\left(a_4^2+b_4^2\right)^{\frac{\alpha}{2}}e^{i{\alpha}\theta_2}
\end{split}
\end{equation*}
with
\begin{equation*}
\begin{split}
a_4&=\frac{1}{12}\left(25-23\cos x+13\cos(2x)-3\cos(3x)\right)>0,\\
b_4&=\frac{1}{12}\left(23\sin x-13\sin(2x)+3\sin(3x)\right)\geqslant 0,\\
\theta_2&=\arctan\frac{-\left(23\sin x-13\sin(2x)+3\sin(3x)\right)}{25-23\cos x+13\cos(2x)-3\cos(3x)}\leqslant0.
\end{split}
\end{equation*}
Moreover, we get
\begin{equation*}
\begin{split}
\frac{1}{1-\frac{1}{2}z}=\frac{1}{\sqrt{\frac{5}{4}-\cos x}}e^{i\theta_3},~~\theta_3=\arctan\frac{\frac{1}{2}\sin (x)}{1-\frac{1}{2}\cos x}\leqslant \arctan1= \frac{\pi}{4}.
\end{split}
\end{equation*}
From Lemma \ref{lemma2.7}, we need to prove
\begin{equation*}
\begin{split}
\Real\left\{\frac{\left(\frac{25}{12}-4z+3z^2-\frac{4}{3}z^3+\frac{1}{4}z^4\right)^{\alpha}}{1-\frac{1}{2}z}\right\}\geqslant0,
\end{split}
\end{equation*}
which is equal to prove
\begin{equation*}
\begin{split}
\arg\left\{\frac{\left(\frac{25}{12}-4z+3z^2-\frac{4}{3}z^3+\frac{1}{4}z^4\right)^{\alpha}}{1-\frac{1}{2}z}\right\}\in\left[-\frac{\pi}{2},\frac{\pi}{2}\right].
\end{split}
\end{equation*}
According to the above equations, we get
\begin{equation*}
\begin{split}
&\arg\left\{\frac{\left(\frac{25}{12}-4z+3z^2-\frac{4}{3}z^3+\frac{1}{4}z^4\right)^{\alpha}}{1-\frac{1}{2}z}\right\}\\
&=\arg\left\{\left(1-z\right)^{\alpha}\right\}\!+\arg\left\{\left(\frac{25}{12}-\frac{23}{12}z+\frac{13}{12}z^2-\frac{1}{4}z^3\right)^{\alpha}\right\}
+\arg\left\{\frac{1}{1-\frac{1}{2}z}\right\}\\
&=\alpha \theta_1+\alpha \theta_2+\theta_3.
\end{split}
\end{equation*}
Since $\alpha \theta_1+\alpha \theta_2+\theta_3\leqslant \theta_3\leqslant  \frac{\pi}{4}$ and $\alpha \theta_1+\alpha \theta_2+\theta_3\geqslant \theta_1+\theta_2+\theta_3$.
Next we only need to prove $g(x)=\left(\theta_1+\theta_2+\theta_3\right)(x)\geqslant -\frac{\pi}{2}$.
With $y:=\cos x,$ we have
\begin{equation*}
\begin{split}
g'(x)=\frac{8}{\left(-600y^3+1576y^2-1376y+544\right)\left(5-4y\right)}h(y).
\end{split}
\end{equation*}
Here $h(y)=450y^4-1601y^3+1949y^2-862y+82, y\in(-1,1)$ has the roots
$y_1\approx 0.1288$ with $x_1\approx 1.4416$, $y_2\approx 0.76042$ with $x_2\approx 0.7068$.
In further, we have $h(y)>0$ if $y\in(-1,y_1)$ and $h(y)<0$ if  $y\in(y_1,y_2)$ and $h(y)>0$ if  $y\in(y_2,1)$.
Moreover, combining with $\left(-600y^3+1576y^2-1376y+544\right)\left(5-4y\right)>0$, it implies  that
 $g'(x)>0$ if $x\in(x_1,\pi)$ and $g'(x)<0$ if  $x\in(x_2,x_1)$ and $g'(x)>0$ if  $x\in(0,x_2)$.
Therefore, it is seen that $g$ attains its minimum at $x^\star=x_1$ 
and
\[g(x^\star)>-1.37>-\frac{\pi}{2}.\]
On the other hand, it can be easily checked that $g(0)=-\frac{\pi}{2}$ and $g(\pi)=0$.
Hence, we have $g(x)\geqslant-\frac{\pi}{2}$.
The proof is completed.
\end{proof}

\begin{lemma}(BDF5) \label{lemma4.002}
Let $q_j^k$ be defined by (\ref{2.7}). Then for any positive integer $N$, it holds that
\begin{equation*}
\begin{split}
\sum_{n=1}^N\left(\sum_{j=0}^{n-1} q_j^k v^{n-j},v^n\right)\geqslant 0.
\end{split}
\end{equation*}
\end{lemma}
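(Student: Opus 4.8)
The plan is to mirror the proofs of Lemmas~\ref{lemma4.1} and \ref{lemma4.001}: form the quotient $q(\zeta)=g(\zeta)/\mu(\zeta)$ explicitly via \eqref{2.7} and \eqref{3.001}, then verify the sign condition of Lemma~\ref{lemma2.7}. For BDF5 the multiplier from Table~\ref{table:2} is $(\mu_1,\mu_2)=(1,-\tfrac14)$, so, writing $z=e^{-\sigma\tau}\zeta$, the denominator is $\mu=1-z+\tfrac14 z^2=(1-\tfrac12 z)^2$. The one structural novelty compared with the two previous cases is precisely that $\mu$ is now a \emph{perfect square}. With $\delta_5(z)=\sum_{j=1}^5\frac1j(1-z)^j=(1-z)Q(z)$, where $Q(z)=\sum_{j=1}^5\frac1j(1-z)^{j-1}$ is a quartic, this gives
\begin{equation*}
q(\zeta)=\frac{\bigl(\delta_5(z)\bigr)^\alpha}{(1-\tfrac12 z)^2}
=\frac{(1-z)^\alpha\,(Q(z))^\alpha}{(1-\tfrac12 z)^2}.
\end{equation*}

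Then I would set $z=e^{ix}$ and work on $x\in[0,\pi]$, the branch $x\in[\pi,2\pi]$ being handled by symmetry. As before, $(1-z)^\alpha$ contributes argument $\alpha\theta_1$ with $\theta_1=\tfrac{x-\pi}2\leqslant 0$; writing $Q(e^{ix})=a_5-ib_5$, I would check $a_5>0$ and $b_5\geqslant 0$ on $[0,\pi]$ so that $\theta_2:=\arg Q\leqslant 0$. The essential difference lies in the denominator: from $\tfrac{1}{1-\frac12 z}=(\tfrac54-\cos x)^{-1/2}e^{i\theta_3}$ with $\theta_3=\arctan\frac{\frac12\sin x}{1-\frac12\cos x}\leqslant\frac\pi4$, squaring doubles the angle, so that
\begin{equation*}
\arg q(\zeta)=\alpha\theta_1+\alpha\theta_2+2\theta_3.
\end{equation*}
The upper bound is immediate, $\alpha\theta_1+\alpha\theta_2+2\theta_3\leqslant 2\theta_3\leqslant\frac\pi2$, while for the lower bound the inequalities $\alpha\theta_i\geqslant\theta_i$ (valid since $\alpha\in(0,1)$ and $\theta_i\leqslant 0$) reduce the claim to $g(x):=\theta_1+\theta_2+2\theta_3\geqslant-\frac\pi2$.

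I expect the lower bound $g(x)\geqslant-\frac\pi2$ to be the main obstacle, just as in Lemma~\ref{lemma4.001}. I would attack it by differentiation: with $y=\cos x$, the derivative $g'(x)$ collapses, after factoring out a manifestly positive rational denominator, to the sign of a single polynomial $h(y)$ on $[-1,1]$; locating the real roots of $h$ in $[-1,1]$ identifies the critical points of $g$, and evaluating $g$ at the interior minima together with the endpoint values $g(0)=-\frac\pi2$ and $g(\pi)=0$ yields the claim. Two features make the computation heavier than for BDF3: the squared denominator raises the degree of $h$, and $\theta_2$ now stems from a quartic $Q$, so the root-finding for $h$ must be done carefully (numerically, as in Lemma~\ref{lemma4.001}). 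The decisive quantitative step will be confirming that the relevant interior critical value of $g$ stays strictly above $-\frac\pi2$; the subsidiary facts $a_5>0$ and $b_5\geqslant0$ on $[0,\pi]$ I would dispatch by the same elementary trigonometric estimates already used for $a_4,b_4$.
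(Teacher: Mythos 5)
Your proposal follows essentially the same route as the paper's proof: the same factorization $\mu(\zeta)=(1-\tfrac12 e^{-\sigma\tau}\zeta)^2$, the same splitting $(1-z)^\alpha Q(z)^\alpha$ with the quartic $Q$, the same argument decomposition $\alpha\theta_1+\alpha\theta_2+2\theta_3$ with upper bound $2\theta_3\leqslant\tfrac{\pi}{2}$, and the same reduction of the lower bound to $g(x)=\theta_1+\theta_2+2\theta_3\geqslant-\tfrac{\pi}{2}$ settled by differentiating in $y=\cos x$ and locating the roots of a single polynomial numerically (the paper finds the interior minimum near $x_1\approx 1.6705$ with $g(x_1)>-1.33$). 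The plan is correct; what remains is only the explicit polynomial computation and numerics that you correctly identify as the decisive quantitative step.
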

\begin{proof}
From \eqref{2.7} and \eqref{3.001}, it yields
\begin{equation*}
\begin{split}
q(\zeta)=\frac{\left(\frac{137}{60}-5e^{-\sigma\tau}\zeta+5(e^{-\sigma\tau}\zeta)^2-\frac{10}{3}(e^{-\sigma\tau}\zeta)^3+\frac{5}{4}(e^{-\sigma\tau}\zeta)^4-\frac{1}{5}(e^{-\sigma\tau}\zeta)^5\right)^{\alpha}}{\left(1-\frac{1}{2}e^{-\sigma\tau}\zeta\right)^2}.
\end{split}
\end{equation*}
Taking $z=e^{-\sigma\tau}\zeta$, it leads to
\begin{equation*}
\begin{split}
\frac{\left(\frac{137}{60}-5z+5z^2-\frac{10}{3}z^3+\frac{5}{4}z^4-\frac{1}{5}z^5\right)^{\alpha}}{\left(1-\frac{1}{2}z\right)^2}=
\frac{\left(1-z\right)^{\alpha}\left(\frac{137}{60}-\frac{163}{60}z+\frac{137}{60}z^2-\frac{63}{60}z^3+\frac{1}{5}z^4\right)^{\alpha}}{\left(1-\frac{1}{2}z\right)^2}.
\end{split}
\end{equation*}

We next apply  the  Grenander-Szeg\"{o} theorem  to prove the desired result.
Let  $z=e^{ix}$ with $x\in [0,\pi]$, it yields
\begin{equation*}
\begin{split}
\left(1-z\right)^{\alpha}=\left(2\sin\frac{x}{2}\right)^{\alpha}e^{i{\alpha}\theta_1}
\end{split}
\end{equation*}
with $\theta_1=\arctan\frac{-\sin (x)}{1-\cos x}=\frac{x-\pi}{2}\leqslant 0$; and
\begin{equation*}
\begin{split}
\left(\frac{137}{60}-\frac{163}{60}z+\frac{137}{60}z^2-\frac{63}{60}z^3+\frac{1}{5}z^4\right)^{\alpha}
=\left(a_5-ib_5\right)^{\alpha}=\left(a_5^2+b_5^2\right)^{\frac{\alpha}{2}}e^{i{\alpha}\theta_2}
\end{split}
\end{equation*}
with
\begin{equation*}
\begin{split}
a_5&=\frac{1}{60}\left(137-163\cos x+137\cos(2x)-63\cos(3x)+12\cos(4x)\right)>0,\\
b_5&=\frac{1}{60}\left(163\sin x-137\sin(2x)+63\sin(3x)-12\sin(4x)\right)\geqslant 0,\\
\theta_2&=\arctan\frac{-\left(163\sin x-137\sin(2x)+63\sin(3x)-12\sin(4x)\right)}{137-163\cos x+137\cos(2x)-63\cos(3x)+12\cos(4x)}\leqslant0.
\end{split}
\end{equation*}
Moreover, we get
\begin{equation*}
\begin{split}
\frac{1}{\left(1-\frac{1}{2}z\right)^2}=\frac{1}{\frac{5}{4}-\cos x}e^{i\theta_3},~~\theta_3=2\arctan\frac{\frac{1}{2}\sin (x)}{1-\frac{1}{2}\cos x}\leqslant 2\arctan1= \frac{\pi}{2}.
\end{split}
\end{equation*}
From Lemma \ref{lemma2.7}, we need to prove
\begin{equation*}
\begin{split}
\Real\left\{\frac{\left(\frac{137}{60}-5z+5z^2-\frac{10}{3}z^3+\frac{5}{4}z^4-\frac{1}{5}z^5\right)^{\alpha}}{\left(1-\frac{1}{2}z\right)^2}\right\}\geqslant0,
\end{split}
\end{equation*}
which is equal to prove
\begin{equation*}
\begin{split}
\arg\left\{\frac{\left(\frac{137}{60}-5z+5z^2-\frac{10}{3}z^3+\frac{5}{4}z^4-\frac{1}{5}z^5\right)^{\alpha}}{\left(1-\frac{1}{2}z\right)^2}\right\}\in\left[-\frac{\pi}{2},\frac{\pi}{2}\right].
\end{split}
\end{equation*}
According to the above equations, we have
\begin{equation*}
\begin{split}
&\arg\left\{\frac{\left(\frac{137}{60}-5z+5z^2-\frac{10}{3}z^3+\frac{5}{4}z^4-\frac{1}{5}z^5\right)^{\alpha}}{\left(1-\frac{1}{2}z\right)^2}\right\}\\
&=\arg\left\{\left(1-z\right)^{\alpha}\right\}+\arg\left\{\left(\frac{137}{60}-\frac{163}{60}z+\frac{137}{60}z^2-\frac{63}{60}z^3+\frac{1}{5}z^4\right)^{\alpha}\right\}\\
&\quad+\arg\left\{\frac{1}{\left(1-\frac{1}{2}z\right)^2}\right\}=\alpha \theta_1+\alpha \theta_2+\theta_3.
\end{split}
\end{equation*}
Since $\alpha \theta_1+\alpha \theta_2+\theta_3\leqslant \theta_3\leqslant  \frac{\pi}{2}$ and $\alpha \theta_1+\alpha \theta_2+\theta_3\geqslant \theta_1+\theta_2+\theta_3$.
Next we need to prove $g(x)=\left(\theta_1+\theta_2+\theta_3\right)(x)\geqslant -\frac{\pi}{2}$.
With $y:=\cos x,$ we have
\begin{equation*}
\begin{split}
g'(x)=\frac{4}{\left(6576y^4-21174y^3+24106y^2-11144y+2536\right)\left(5-4y\right)}h(y).
\end{split}
\end{equation*}
Here $h(y)=-9864y^5+42348y^4-66272y^3+43988y^2-8407y-1343,~y\in(-1,1)$ and the roots are
$y_1\approx -0.0996$ with $x_1\approx 1.6705$, $y_2\approx 0.6531$ with $x_2\approx 0.8591$, respectively.
In further, we have $h(y)>0$ if $y\in(-1,y_1)$ and $h(y)<0$ if  $y\in(y_1,y_2)$ and $h(y)>0$ if  $y\in(y_2,1)$.
Moreover, combining with
$$\left(6576y^4-21174y^3+24106y^2-11144y+2536\right)\left(5-4y\right)>0,$$  it implies  that
 $g'(x)>0$ if $x\in(x_1,\pi)$ and $g'(x)<0$ if  $x\in(x_2,x_1)$ and $g'(x)>0$ if  $x\in(0,x_2)$.
Therefore,  the function $g$ attains its minimum at $x^\star=x_1$ 
and
\[g(x^\star)>-1.33>-\frac{\pi}{2}.\]
On the other hand, it can be easily checked that $g(0)=-\frac{\pi}{2}$ and $g(\pi)=0$.
Hence, we have $g(x)\geqslant-\frac{\pi}{2}$.
The proof is completed.
\end{proof}

\begin{lemma}(BDF6) \label{lemma4.003}
Let $q_j^k$ be defined by (\ref{2.8}). Then for any positive integer $N$, it holds that
\begin{equation*}
\begin{split}
\sum_{n=1}^N\left(\sum_{j=0}^{n-1} q_j^k v^{n-j},v^n\right)\geqslant 0.
\end{split}
\end{equation*}
\end{lemma}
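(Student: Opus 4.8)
The plan is to follow the template already established in Lemmas~\ref{lemma4.1}--\ref{lemma4.002}, reducing everything through Lemma~\ref{lemma2.7} to an argument estimate for $q(\zeta)=g(\zeta)/\mu(\zeta)$ on the unit circle, the new feature being that the multiplier polynomial $\mu$ is now a genuine cubic rather than $1-\tfrac12 z$ or its square. First I would record $q(\zeta)$ from \eqref{3.001} with the BDF6 data, substitute $z=e^{-\sigma\tau}\zeta$, and use the factorization $\sum_{j=1}^{6}\tfrac1j(1-z)^j=(1-z)\,r(z)$ with $r(z)=\tfrac1{60}\bigl(147-213z+237z^{2}-163z^{3}+62z^{4}-10z^{5}\bigr)$ to write $q=(1-z)^{\alpha}r(z)^{\alpha}/\mu(z)$. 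I would also record $\mu(z)=\tfrac1{30}(2-z)(3-z)(5-3z)$, whose roots $2,3,\tfrac53$ lie outside the unit disk in accordance with Definition~\ref{De:As}. By Lemma~\ref{lemma2.7} it then suffices to prove $\arg q(e^{ix})\in[-\tfrac\pi2,\tfrac\pi2]$ for $x\in[0,\pi]$, the range $[\pi,2\pi]$ being symmetric.

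Setting $z=e^{ix}$ I would split $\arg q=\alpha\theta_1+\alpha\theta_2+\theta_3$ exactly as before: $\theta_1=\arg(1-z)=\tfrac{x-\pi}{2}\le0$; $\theta_2=\arg r(e^{ix})=\arctan(-b_6/a_6)\le0$, where $a_6=\Real\, r$ and $b_6=-\operatorname{Im} r$ are the explicit degree-five trigonometric sums, which I must check satisfy $a_6>0$ and $b_6\ge0$; and $\theta_3=-\arg\mu(e^{ix})\ge0$. For the upper bound, $\alpha\theta_1+\alpha\theta_2\le0$ reduces matters to $\theta_3\le\tfrac\pi2$. Here I would reuse the computation already carried out in Lemma~\ref{lemma3.1}: a direct expansion gives $\Real\,\mu(e^{ix})=p(\cos x)+\tfrac1{24}$, where $p$ is precisely the cubic shown positive on $[-1,1]$ in Lemma~\ref{lemma3.1}, so $\Real\,\mu>0$; combined with $\operatorname{Im}\mu(e^{ix})=\sin x\,\bigl[-\tfrac43(1-\cos x)-\tfrac25\cos^{2}x\bigr]\le0$, this places $\mu$ in the closed fourth quadrant, whence $\arg\mu\in(-\tfrac\pi2,0]$ and $\theta_3\in[0,\tfrac\pi2)$. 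Therefore $\arg q\le\theta_3<\tfrac\pi2$.

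For the lower bound, $0<\alpha<1$ together with $\theta_1,\theta_2\le0$ gives $\arg q\ge\theta_1+\theta_2+\theta_3=:g(x)$, and since $(1-z)r(z)=\delta_6(z)$ one has the clean identity $g(x)=\arg\bigl(\delta_6(e^{ix})/\mu(e^{ix})\bigr)$. It then remains to show $g(x)\ge-\tfrac\pi2$. Mirroring Lemmas~\ref{lemma4.001}--\ref{lemma4.002}, I would differentiate and write $g'(x)$, with $y=\cos x$, in the form $\tfrac{(\text{positive constant})}{(\text{explicitly positive polynomial})}\,h(y)$, locate the real roots of $h$ in $(-1,1)$, read off the monotonicity pattern of $g$ (expected: increasing near $0$, one interior dip, then increasing to $g(\pi)$), and certify that at the interior minimizer $x^\star$ one has $g(x^\star)>-\tfrac\pi2$; together with the boundary values $g(0)=-\tfrac\pi2$ and $g(\pi)=0$ this yields $g\ge-\tfrac\pi2$ on $[0,\pi]$, completing the proof via Lemma~\ref{lemma2.7}.

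The hard part will be exactly this last step. Because $\mu$ is a true cubic and $\delta_6$ has degree six, the numerator $h(y)$ produced by $g'(x)$ is of markedly higher degree than the quartic and quintic arising for BDF4 and BDF5, so isolating its roots in $(-1,1)$, fixing the sign pattern of $g'$, and certifying $g(x^\star)>-\tfrac\pi2$ at the minimizer require the most delicate, essentially computer-assisted, estimates. A secondary technical point is verifying the sign conditions $a_6>0$ and $b_6\ge0$ for the degree-five factor $r$, which guarantee $\theta_2\le0$ and thus underpin the whole argument decomposition.
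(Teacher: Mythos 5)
Your plan is essentially the paper's proof: the same reduction via Lemma~\ref{lemma2.7} to $\arg q(e^{ix})\in[-\frac{\pi}{2},\frac{\pi}{2}]$, the same factorization $(1-z)^{\alpha}r(z)^{\alpha}/\mu(z)$ with the same quintic $r$, the same decomposition $\arg q=\alpha\theta_1+\alpha\theta_2+(\text{argument of }1/\mu)$, and the same final calculus step $g'(x)=c\,h(y)/C(y)$ with $y=\cos x$, root isolation of a degree-eight $h$, and certification $g(x^\star)>-\frac{\pi}{2}$ against the boundary values $g(0)=-\frac{\pi}{2}$, $g(\pi)=0$. One place where you genuinely diverge, to your advantage: for the upper bound you treat $\mu$ as a single cubic and show $\mu(e^{ix})$ lies in the closed fourth quadrant, using $\Real\mu(e^{ix})=p(\cos x)+\frac{1}{24}>0$ (recycling the cubic $p$ already shown positive in Lemma~\ref{lemma3.1}) together with the correct identity $\operatorname{Im}\mu(e^{ix})=\sin x\,[-\frac43(1-\cos x)-\frac25\cos^2x]\leqslant0$. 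The paper instead splits $\mu$ into its three linear factors $\bigl(1-\tfrac35z\bigr)\bigl(1-\tfrac12z\bigr)\bigl(1-\tfrac13z\bigr)$ and must run a second derivative/root-finding computation to show $\theta_3+\theta_4+\theta_5<\frac{\pi}{2}$; your version eliminates that entire auxiliary argument.

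There is, however, one concrete step in your proposal that fails as written: you say you will check $a_6>0$ for the quintic factor $r$. This is false for BDF6 (unlike BDF3--5): at $x=\pi/2$ one has
\begin{equation*}
a_6\Bigl(\frac{\pi}{2}\Bigr)=\frac{1}{60}\bigl(147-237+62\bigr)=-\frac{28}{60}<0,
\end{equation*}
and indeed the paper notes that the minimizer $x_1$ of $g$ satisfies $a_6(x_1)<0$. Consequently $\theta_2$ cannot be defined as $\arctan(-b_6/a_6)$ throughout; on the set where $a_6\leqslant0$ you must take the branch $\theta_2=\arctan(-b_6/a_6)-\pi$ (equivalently, take the continuous argument of $r(e^{ix})$ starting from $\arg r(1)=0$, which stays in $[-\pi,0]$ because $b_6\geqslant0$ keeps $r$ in the closed lower half-plane). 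With that correction $\theta_2\leqslant0$ still holds and both your upper and lower bounds go through unchanged, but without it the decomposition $\arg q=\alpha\theta_1+\alpha\theta_2+\theta_3$ is simply wrong on part of $[0,\pi]$. You correctly identified the degree-eight root isolation as the heavy computation, but this branch issue is the one point where your stated verification would actually break.
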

\begin{proof}
From \eqref{2.8} and \eqref{3.001}, we get
$
q(\zeta)=\frac{g_6(\zeta)}{\mu_6(\zeta)}
$
with
\begin{equation*}
  \begin{split}
  g_6(\zeta)=&  \Bigg(\frac{147}{60}-6e^{-\sigma\tau}\zeta+\frac{15}{2}(e^{-\sigma\tau}\zeta)^2
-\frac{20}{3}(e^{-\sigma\tau}\zeta)^3+\frac{15}{4}(e^{-\sigma\tau}\zeta)^4\\
&-\frac{6}{5}(e^{-\sigma\tau}\zeta)^5
+\frac{1}{6}(e^{-\sigma\tau}\zeta)^6\Bigg)^{\alpha},
  \end{split}
\end{equation*}
and
$\mu_6(\zeta)=\left(1-\frac{3}{5}e^{-\sigma\tau}\zeta\right)\left(1-\frac{1}{2}e^{-\sigma\tau}\zeta\right)\left(1-\frac{1}{3}e^{-\sigma\tau}\zeta\right)$.

Taking $z=e^{-\sigma\tau}\zeta$, it leads to
\begin{equation*}
\begin{split}
&\frac{\left(\frac{147}{60}-6z+\frac{15}{2}z^2-\frac{20}{3}z^3+\frac{15}{4}z^4-\frac{6}{5}z^5+\frac{1}{6}z^6\right)^{\alpha}}{\left(1-\frac{3}{5}z\right)\left(1-\frac{1}{2}z\right)\left(1-\frac{1}{3}z\right)}\\
&=\frac{\left(1-z\right)^{\alpha}\left(\frac{147}{60}-\frac{213}{60}z+\frac{237}{60}z^2-\frac{163}{60}z^3+\frac{62}{60}z^4-\frac{10}{60}z^5\right)^{\alpha}}
{\left(1-\frac{3}{5}z\right)\left(1-\frac{1}{2}z\right)\left(1-\frac{1}{3}z\right)}.
\end{split}
\end{equation*}

Next we apply  the  Grenander-Szeg\"{o} theorem  to obtain the desired result.
Let  $z=e^{ix}$ with $x\in [0,\pi]$, we have
\begin{equation*}
\begin{split}
\left(1-z\right)^{\alpha}=\left(2\sin\frac{x}{2}\right)^{\alpha}e^{i{\alpha}\theta_1}
\end{split}
\end{equation*}
with $\theta_1=\arctan\frac{-\sin (x)}{1-\cos x}=\frac{x-\pi}{2}\leqslant 0$; and
\begin{equation*}
\begin{split}
\left(\frac{147}{60}-\frac{213}{60}z+\frac{237}{60}z^2-\frac{163}{60}z^3+\frac{62}{60}z^4-\frac{10}{60}z^5\right)^{\alpha}
=\left(a_6-ib_6\right)^{\alpha}=\left(a_6^2+b_6^2\right)^{\frac{\alpha}{2}}e^{i{\alpha}\theta_2}
\end{split}
\end{equation*}
with
\begin{equation*}
\begin{split}
a_6(x)&=\frac{1}{60}\left(147-213\cos x+237\cos(2x)-163\cos(3x)+62\cos(4x)-10\cos(5x)\right),\\
b_6(x)&=\frac{1}{60}\left(213\sin x-237\sin(2x)+163\sin(3x)-62\sin(4x)+10\sin(5x)\right)\geqslant 0,\\
\end{split}
\end{equation*}
and $\theta_2=\arctan\frac{-b_6(x)}{a_6(x)}\leqslant 0,~~a_6(x)\geqslant 0,$ or $\theta_2=\arctan\frac{-b_6(x)}{a_6(x)}-\pi\leqslant 0,~~a_6(x)\leqslant 0.$
Furthermore, there exists
\begin{equation*}
\begin{split}
\frac{1}{1-\frac{3}{5}z}=\frac{1}{\sqrt{\frac{34}{25}-\frac{6}{5}\cos x}}e^{i\theta_3},~~\theta_3=\arctan\frac{\frac{3}{5}\sin x}{1-\frac{3}{5}\cos x}\geqslant 0,
\end{split}
\end{equation*}
\begin{equation*}
\begin{split}
\frac{1}{1-\frac{1}{2}z}=\frac{1}{\sqrt{\frac{5}{4}-\cos x}}e^{i\theta_4},~~\theta_4=\arctan\frac{\frac{1}{2}\sin x}{1-\frac{1}{2}\cos x}\geqslant 0,
\end{split}
\end{equation*}
\begin{equation*}
\begin{split}
\frac{1}{1-\frac{1}{3}z}=\frac{1}{\sqrt{\frac{10}{9}-\frac{2}{3}\cos x}}e^{i\theta_5},~~\theta_5=\arctan\frac{\frac{1}{3}\sin x}{1-\frac{1}{3}\cos x}\geqslant 0.
\end{split}
\end{equation*}
From Lemma \ref{lemma2.7}, we need to prove
\begin{equation*}
\begin{split}
\Real\left\{\frac{\left(\frac{147}{60}-6z+\frac{15}{2}z^2-\frac{20}{3}z^3+\frac{15}{4}z^4-\frac{6}{5}z^5+\frac{1}{6}z^6\right)^{\alpha}}{\left(1-\frac{3}{5}z\right)\left(1-\frac{1}{2}z\right)\left(1-\frac{1}{3}z\right)}\right\}\geqslant0,
\end{split}
\end{equation*}
which is equal to prove
\begin{equation*}
\begin{split}
\arg\left\{\frac{\left(\frac{147}{60}-6z+\frac{15}{2}z^2-\frac{20}{3}z^3+\frac{15}{4}z^4-\frac{6}{5}z^5+\frac{1}{6}z^6\right)^{\alpha}}{\left(1-\frac{3}{5}z\right)\left(1-\frac{1}{2}z\right)\left(1-\frac{1}{3}z\right)}\right\}\in\left[-\frac{\pi}{2},\frac{\pi}{2}\right].
\end{split}
\end{equation*}
According to the above equations, we have
\begin{equation*}
\begin{split}
&\arg\left\{\frac{\left(\frac{147}{60}-6z+\frac{15}{2}z^2-\frac{20}{3}z^3+\frac{15}{4}z^4-\frac{6}{5}z^5+\frac{1}{6}z^6\right)^{\alpha}}{\left(1-\frac{3}{5}z\right)\left(1-\frac{1}{2}z\right)\left(1-\frac{1}{3}z\right)}\right\}\\
&=\arg\left\{\left(1-z\right)^{\alpha}\right\}+\arg\left\{\left(\frac{147}{60}-\frac{213}{60}z+\frac{237}{60}z^2-\frac{163}{60}z^3+\frac{62}{60}z^4-\frac{10}{60}z^5\right)^{\alpha}\right\}\\
&\quad+\arg\left\{\frac{1}{1-\frac{3}{5}z}\right\}+\arg\left\{\frac{1}{1-\frac{1}{2}z}\right\}+\arg\left\{\frac{1}{1-\frac{1}{3}z}\right\}\\
&=\alpha\theta_1+\alpha\theta_2+\theta_3+\theta_4+\theta_5.
\end{split}
\end{equation*}
Since $\alpha \theta_1+\alpha \theta_2+\theta_3+\theta_4+\theta_5\geqslant \theta_1+\theta_2+\theta_3+\theta_4+\theta_5$ and $\alpha \theta_1+\alpha \theta_2+\theta_3+\theta_4+\theta_5\leqslant \theta_3+\theta_4+\theta_5<\frac{\pi}{2}$. In fact, $\delta(x)=\theta_3+\theta_4+\theta_5<\frac{\pi}{2}$ follows by taking $\cos x=y$, we have
\begin{equation*}
\begin{split}
\delta'(x)=\frac{2}{\left(17-15y\right)\left(5-4y\right)\left(5-3y\right)}p(y).
\end{split}
\end{equation*}
Here $p(y)=135y^3-429y^2+420y-120,~y\in(-1,1)$
has the root $y_1\approx0.5041$ with $x_1\approx1.0425$.
In further, we obtain $p(y)<0$ if $y\in(-1,y_1)$ and $p(y)>0$ if $y\in(y_1,1)$.
 Moreover, combining with $\left(17-15y\right)\left(5-4y\right)\left(5-3y\right)>0$, it implies  that
 $\delta'(x)<0$ if $x\in(x_1,\pi)$ and $\delta'(x)>0$ if $x\in(0,x_1)$.
  Therefore, the function  $\delta$ attains its maximum at $x^\star=x_1$ 
and
\[\delta(x^\star)<1.5<\frac{\pi}{2}.\]
Next we just need to prove $g(x)=\theta_1+\theta_2+\theta_3+\theta_4+\theta_5\geqslant -\frac{\pi}{2}$.
With $y:=\cos x,$ we obtain $g'(x)=4h(y)/C_y.$
Here
\begin{equation*}
\begin{split}
C_y
=&\left(-11760y^5\!+\!44976y^4\!-\!64374y^3\!+\!40906y^2\!-\!9944y\!+\!1096\right)\\
&\times\left(5-4y\right)\left(5-3y\right)\left(17-15y\right)>0,~~\forall y\in [-1,1];
\end{split}
\end{equation*}
and
\begin{equation*}
\begin{split}
 h(y)=&793800y^8-6314580y^7+20885463y^6- 37146627y^5+38067828y^4\\
      &- 21920022y^3+5908998y^2- 72525y-199635,~y\in[-1,1],
\end{split}
\end{equation*}
 which  has the roots
$y_1\approx -0.1391$ with $x_1\approx 1.7103$, $y_2\approx 0.5015$ with $x_2\approx 1.0455$.
In further, we have $h(y)>0$ if $y\in(-1,y_1)$ and $h(y)<0$ if  $y\in(y_1,y_2)$ and $h(y)>0$ if  $y\in(y_2,1)$.
It implies  that
 $g'(x)>0$ if $x\in(x_1,\pi)$ and $g'(x)<0$ if  $x\in(x_2,x_1)$ and $g'(x)>0$ if  $x\in(0,x_2)$.
Therefore, the funciton  $g$ attains its minimum at $x^\star=x_1$ with $a_6(x_1)<0$ 
and
\[g(x^\star)>-1.566>-\frac{\pi}{2}.\]
On the other hand, it can be easily checked that $g(0)=-\frac{\pi}{2}$ and $g(\pi)=0$.
Hence, we have $g(x)\geqslant-\frac{\pi}{2}$.
The proof is completed.
\end{proof}
\begin{remark}
  From \eqref{3.001} and  Lemmas \ref{lemma4.1}-\ref{lemma4.003}, it yields
  \begin{equation*}
\max\arg \left(q(\zeta)\right)\leqslant \frac{\pi}{2},~\alpha \in [0,1],~k=3,4,5,6.
\end{equation*}
Figure \ref{Fig.5.1} also shows that the argument of $q(\zeta)$ are less than or equal to $\frac{\pi}{2}$.
\end{remark}
\begin{figure}[t] \centering
  \begin{tabular}{cc}
      \includegraphics[width=0.5\textwidth]{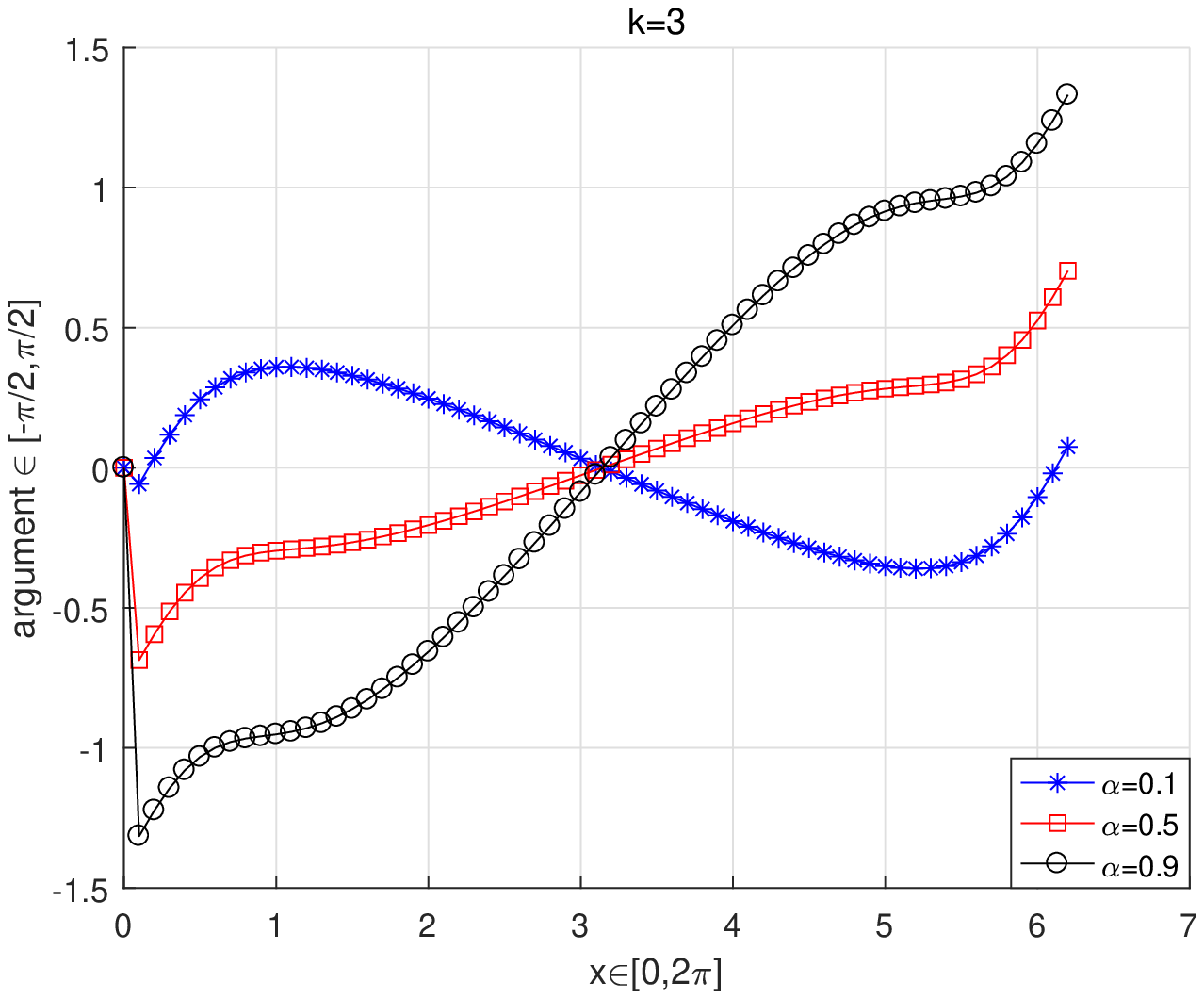}&\includegraphics[width=0.5\textwidth]{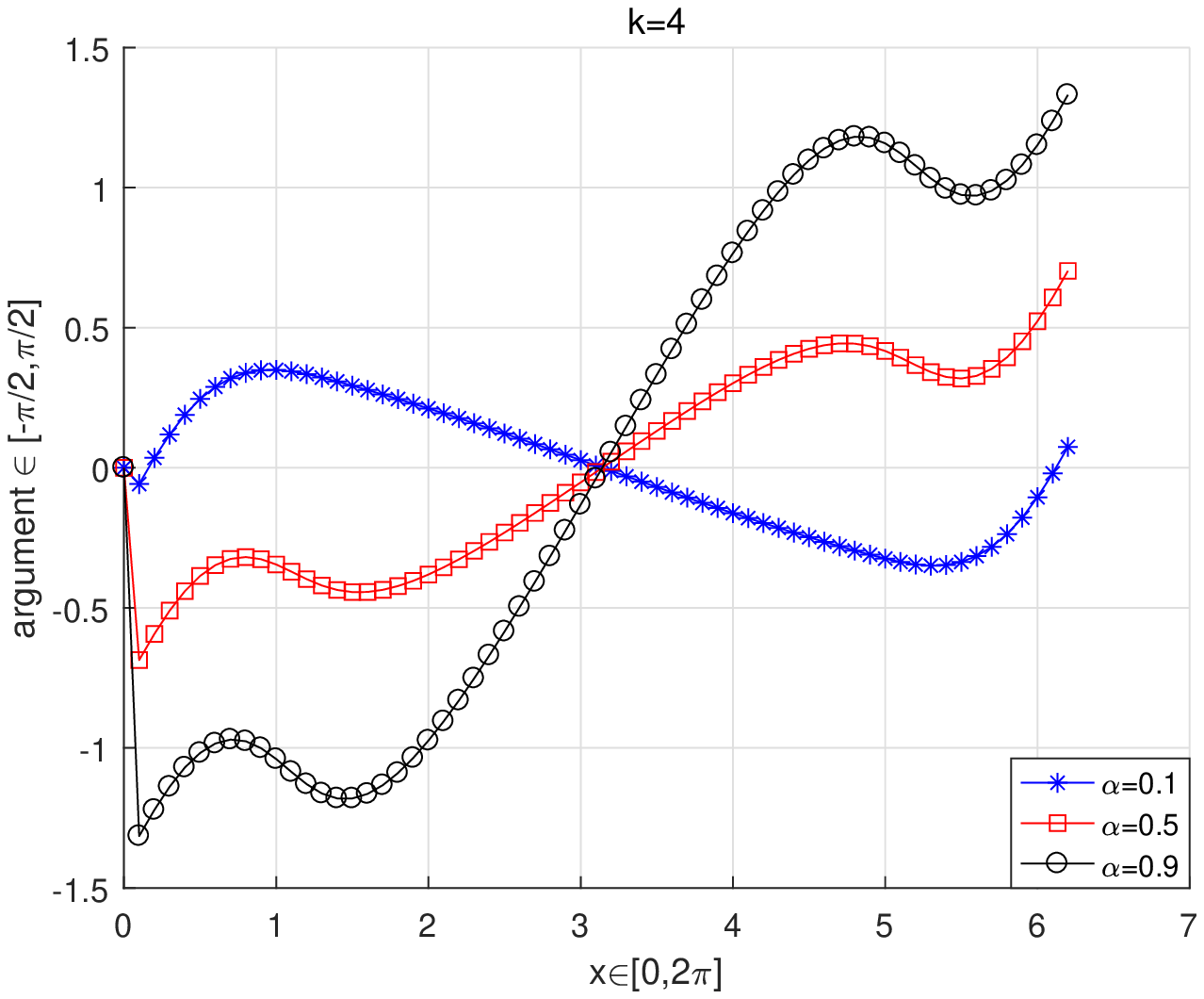}\\
      \includegraphics[width=0.5\textwidth]{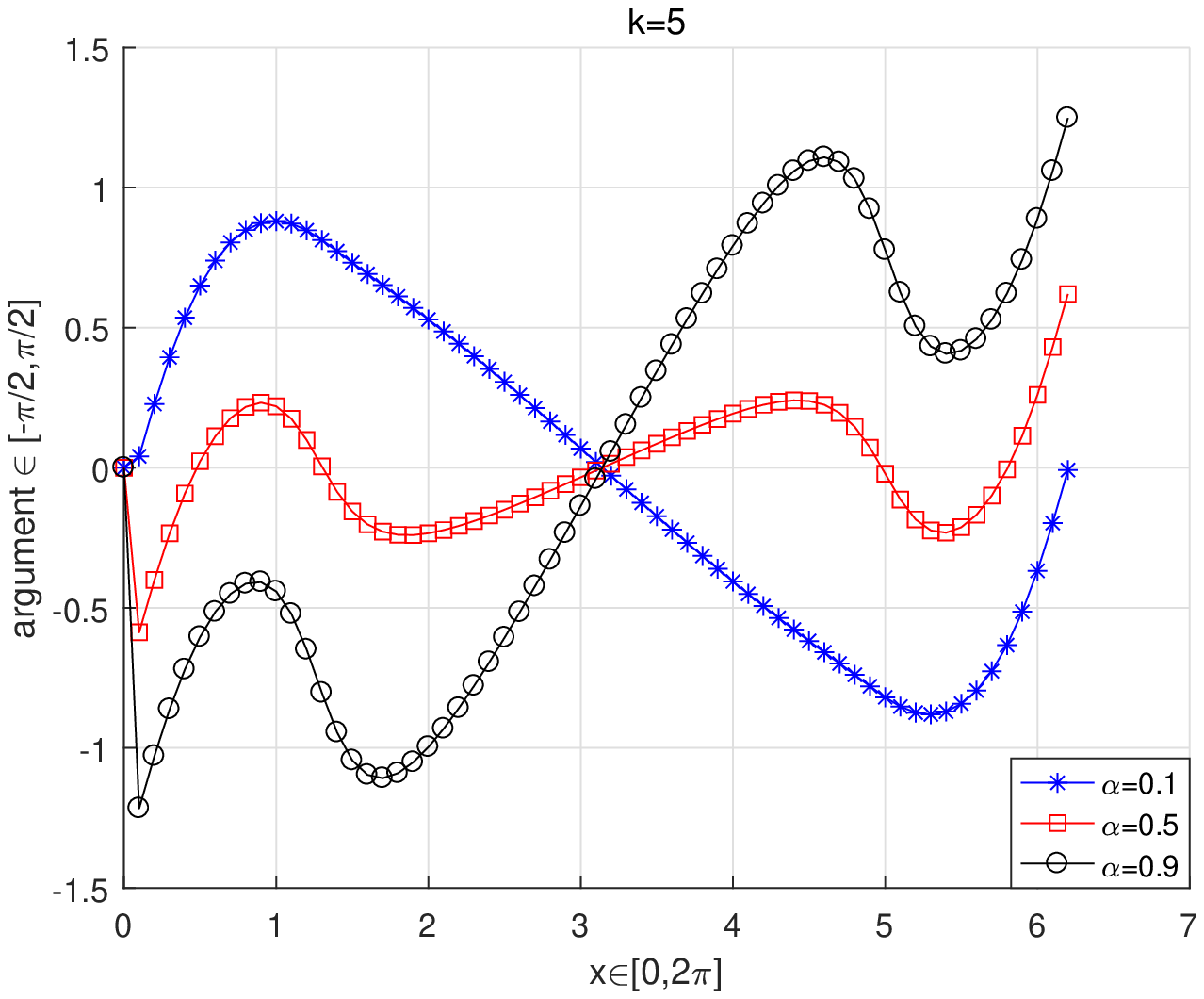}&\includegraphics[width=0.5\textwidth]{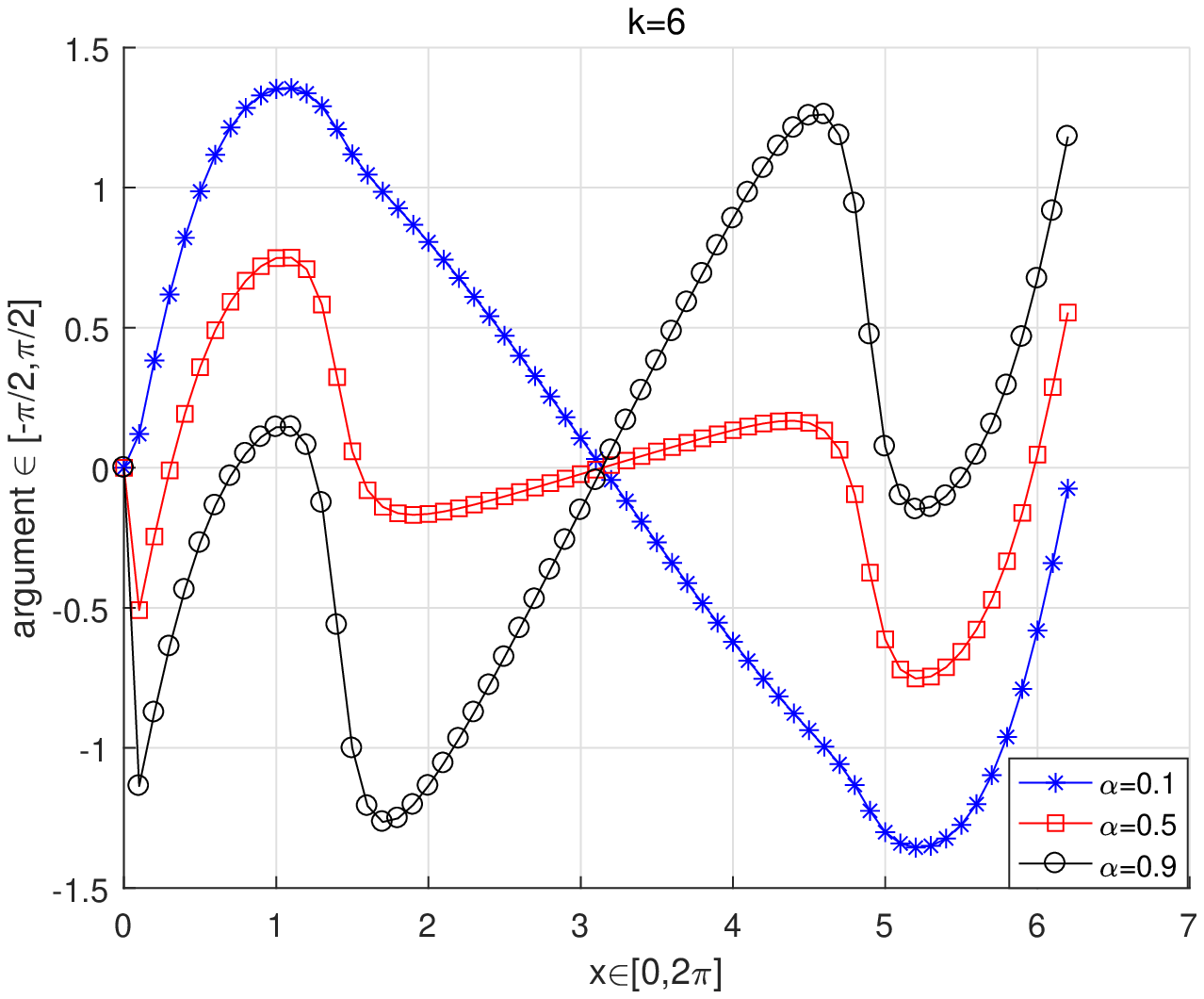}
  \end{tabular}
  \caption{Argument of generating power series $q(\zeta)$ with $\sigma=0$ in \eqref{3.001}.}
  \label{Fig.5.1}
\end{figure}

\section{Stability analysis}\label{Se:stab}

In this section we prove stability of the up to  six-step BDF method \eqref{2.0083} by the energy
technique for subdiffusion equation.  The novelty is the simplicity of the proof,
the  main advantage of the energy technique. The result is well known if $\alpha=1$.
Proofs by other stability techniques are significantly more involved. For example,
by a spectral technique in the case of selfadjoint operators, we refer to \cite[chapter 10]{Thomee:06};
for a proof in the general case, under a sharp condition on the nonselfadjointness of the operator
as well as for nonlinear parabolic equations,
by a combination of spectral and Fourier techniques, see, e.g., \cite{Akrivis:18} and references therein.
For a long-time estimate in the case of selfadjoint operators and an application to the Stokes--Darcy problem, see \cite{LiWangZhou:2020}.
If $0<\alpha<1$, by operational calculus in the case of selfadjoint operators, we refer to \cite{Jin:16,Jin:17}.

\begin{theorem} \label{theorem4.3}	
Let $\tilde{u}^n$ be the approximate solution of $u^n$, which is the exact solution of BDF$k$ corrected scheme (\ref{2.0083}) with $\epsilon^n=\tilde{u}^n-u^n$. Then the BDF$k$  corrected scheme (\ref{2.0083})
  with $k=3,4,5,6$ is stable in the sense that
\begin{equation*}
\begin{split}
\tau\sum_{n=1}^N\|\epsilon^n\|^2\leqslant C \tau\sum_{n=1}^N\| \epsilon^0\|^2~~{\rm and}~~\tau\sum_{n=1}^N\| \epsilon^n\|\leqslant C T\| \epsilon^0\|.
\end{split}
\end{equation*}
\end{theorem}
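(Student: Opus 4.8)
The plan is to recognize that the claimed stability estimate is nothing but the energy estimate for the system \eqref{2.0084} with $\varrho$ replaced by the initial perturbation $\epsilon^0$. First I would set up the error equation: since $\tilde u^n$ and $u^n$ both solve the corrected scheme \eqref{2.0083}, subtracting the two systems shows that $\epsilon^n=\tilde u^n-u^n$ solves the same scheme with data $\epsilon^0=\tilde u^0-u^0$. Introducing $w^n:=\epsilon^n-e^{-\sigma n\tau}\epsilon^0$, so that $w^0=0$, the identity $A\epsilon^n=Aw^n+e^{-\sigma n\tau}A\epsilon^0$ converts the error equation into exactly \eqref{2.0084} with $\varrho$ replaced by $\epsilon^0$. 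Consequently it suffices to prove $\tau\sum_{n=1}^N\|w^n\|^2\leqslant CT\|\epsilon^0\|^2$, after which both bounds follow by transferring back to $\epsilon^n$.

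Next I would run the energy argument already prepared in Section \ref{sec:1}. Testing \eqref{2.0084} (with $\varrho\to\epsilon^0$) by $v^n=w^n-\sum_{j=1}^k\mu_je^{-\sigma j\tau}w^{n-j}$ and summing the identity \eqref{2.4} over $n=1,\dots,N$ gives
\begin{equation*}
\sum_{n=1}^N\left(P\left(\bar{\partial}_{\tau}\right)w^n,v^n\right)+\sum_{n=1}^N\left\langle w^n,v^n\right\rangle=-\sum_{n=1}^Ne^{-\sigma n\tau}\left(1+a_n^{(k)}\right)\left\langle\epsilon^0,v^n\right\rangle.
\end{equation*}
For the first sum I use the representation \eqref{2.9}: interchanging the finite sum over $n$ with the integral over $\alpha$ and invoking the relevant one of Lemmas \ref{lemma4.1}--\ref{lemma4.003} for each fixed $\alpha$, the inner bracket $\sum_{n=1}^N(\sum_{j=0}^{n-1}q_j^kv^{n-j},v^n)$ is nonnegative; since the weight $\tau^{-\alpha}$ and the measure $d\nu(\alpha)$ are nonnegative, the entire first sum is $\geqslant 0$. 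For the second sum, Lemma \ref{lemma3.1} supplies the lower bound $c_k\sum_{n=1}^N\|w^n\|^2$. Hence $c_k\sum_{n=1}^N\|w^n\|^2$ is controlled by the modulus of the right-hand side.

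It remains to estimate that right-hand side. Applying Cauchy--Schwarz first in $V$ and then in the index $n$, and using $e^{-\sigma j\tau}\leqslant 1$, $|a_n^{(k)}|\leqslant C$ together with $\sum_{n=1}^N\|v^n\|^2\leqslant C\sum_{n=1}^N\|w^n\|^2$ (each $v^n$ being a fixed bounded combination of the $w^{n-j}$), I obtain a bound of the form $\|\epsilon^0\|\,(CN)^{1/2}\big(\sum_{n=1}^N\|w^n\|^2\big)^{1/2}$. A Young inequality then absorbs $\tfrac{c_k}{2}\sum_{n=1}^N\|w^n\|^2$ into the left-hand side and leaves $\sum_{n=1}^N\|w^n\|^2\leqslant CN\|\epsilon^0\|^2/c_k^2$; multiplying by $\tau$ and using $\tau N=T$ yields $\tau\sum_{n=1}^N\|w^n\|^2\leqslant CT\|\epsilon^0\|^2$. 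Returning to $\epsilon^n=w^n+e^{-\sigma n\tau}\epsilon^0$ via $\|\epsilon^n\|^2\leqslant 2\|w^n\|^2+2\|\epsilon^0\|^2$ gives $\tau\sum_{n=1}^N\|\epsilon^n\|^2\leqslant C\tau\sum_{n=1}^N\|\epsilon^0\|^2$, and the second estimate $\tau\sum_{n=1}^N\|\epsilon^n\|\leqslant CT\|\epsilon^0\|$ follows from one more Cauchy--Schwarz in $n$, since $\tau\sum_{n=1}^N\|\epsilon^n\|\leqslant\sqrt{T}\,\big(\tau\sum_{n=1}^N\|\epsilon^n\|^2\big)^{1/2}$.

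The genuine difficulty has been front-loaded entirely into the preparatory Lemmas \ref{lemma3.1} and \ref{lemma4.1}--\ref{lemma4.003}, so the theorem itself is a short assembly; the one point that deserves care is the first term. I expect the main obstacle to be justifying that the scalar positivity of Lemmas \ref{lemma4.1}--\ref{lemma4.003} --- stated through the Grenander--Szeg\"o theorem for numerical sequences --- carries over to the $V$-valued sequences $v^n$ under the inner product, and survives both the weight $\tau^{-\alpha}$ and the distributed-order integration $\int_0^1\cdots d\nu(\alpha)$. This is ultimately harmless, because the governing quadratic form is controlled by the same real lower-triangular Toeplitz coefficient array acting identically across the Hilbert-space components and $d\nu$ is a positive measure; but it is the step where the argument must be stated precisely rather than merely by analogy with the scalar case.
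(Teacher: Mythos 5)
Your proposal is correct and follows essentially the same route as the paper: form the error equation, shift by $e^{-\sigma n\tau}\epsilon^0$ to get a homogeneous-initial-value system, test with $v^n=w^n-\sum_j\mu_je^{-\sigma j\tau}w^{n-j}$, discard the nonnegative fractional-derivative term via Lemmas \ref{lemma4.1}--\ref{lemma4.003} together with \eqref{2.9}, lower-bound the elliptic term by Lemma \ref{lemma3.1}, absorb the right-hand side by Young's inequality, and transfer back to $\epsilon^n$ with a final Cauchy--Schwarz for the $\ell^1$ bound. The only differences are cosmetic (notation $w^n$ versus the paper's $\eta^n$, and the order in which Cauchy--Schwarz and Young are applied to the data term).
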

\begin{proof}
Let $\tilde{u}^n$ be the approximate solution of $u^n$, which is the exact solution of BDF$k$ corrected scheme (\ref{2.0083}) with $k=3,4,5,6$.
Putting $\epsilon^n=\tilde{u}^n-u^n$, we have
\begin{equation*}
P\left(\bar{\partial}_{\tau}\right)(\epsilon^n-e^{-\sigma n \tau}\epsilon^0)+A\epsilon^n=-e^{-\sigma n \tau}a_n^{(k)}A\epsilon^0.
\end{equation*}
Let  $\eta^n=\epsilon^n-e^{-\sigma n \tau}\epsilon^0$ with $\eta^0=0$, it yields
\begin{equation}\label{4.10}
P\left(\bar{\partial}_{\tau}\right)\eta^n+A\eta^n=-e^{-\sigma n \tau}\left(1+a_n^{(k)}\right)A\epsilon^0.
\end{equation}
Taking in \eqref{4.10} the inner product with
$v^n=\eta^{n}-\mu_1e^{-\sigma \tau}\eta^{n-1}-\dotsb-\mu_ke^{-\sigma k \tau}\eta^{n-k}$, we have
\begin{equation*}
\left( P\left(\bar{\partial}_{\tau}\right)\eta^n,v^n\right)+\left\langle\eta^n,v^n\right\rangle=-e^{-\sigma n \tau}\left(1+a_n^{(k)}\right)\left\langle\epsilon^0,v^n\right\rangle\leqslant\left|1+a_n^{(k)}\right|\|\epsilon^0\|\cdot\|v^n\|.
\end{equation*}
Multiplying the above inequality by $\tau$ and summing up for $n$ from $1$ to $N$, we get
\begin{equation*}
\begin{split}
\tau\sum_{n=1}^N\left( P\left(\bar{\partial}_{\tau}\right)\eta^n,v^n\right)+\tau\sum_{n=1}^N\left\langle\eta^n,v^n\right\rangle
\leqslant C\tau\sum_{n=1}^N \|\epsilon^0\|\cdot\| v^n\|.
\end{split}
\end{equation*}
According to \eqref{2.9}, Lemmas \ref{lemma4.1}-\ref{lemma4.003} and the above inequality, it implies that
\begin{equation}\label{4.11}
\begin{split}
\tau\sum_{n=1}^N\left\langle\eta^n,v^n\right\rangle\leqslant C \tau\sum_{n=1}^N\left(\frac{\|\epsilon^0\|^2} {4\varepsilon} +\varepsilon\| v^n\|^2\right)~~\forall \varepsilon>0.
\end{split}
\end{equation}
Next we  prove  the following inequality \eqref{4.12} for  three cases: $k=3,4$; $k=5$; and $k=6$.
\begin{equation}\label{4.12}
\begin{split}
\tau\sum_{n=1}^N\|\eta^n\|^2\leqslant C \tau\sum_{n=1}^N\| \epsilon^0\|^2.
\end{split}
\end{equation}

Case $1$: Let $k=3,4$ with $v^n=\eta^n-\frac{1}{2}e^{-\sigma \tau}\eta^{n-1}$. Using  \eqref{4.11}, Lemma \ref{lemma3.1}, we have
\begin{equation*}
\begin{split}
\frac{1}{2}\tau\sum_{n=1}^N\|\eta^n\|^2\leqslant C \tau\sum_{n=1}^N\left(\frac{\|\epsilon^0\|^2} {4\varepsilon_1}+\varepsilon_1\|v^n\|^2\right)\leqslant C \tau\sum_{n=1}^N\frac{\|\epsilon^0\|^2} {4\varepsilon_1} +\frac{5}{2}C\varepsilon_1\tau\sum_{n=1}^N\| \eta^n\|^2.
\end{split}
\end{equation*}
By choosing a sufficiently small $\varepsilon_1$, the desired result (\ref{4.12}) is obtained.

Case $2$:  Let $k=5$ with  $v^n=\eta^n-e^{-\sigma\tau}\eta^{n-1}+\frac{1}{4}e^{-2\sigma\tau}\eta^{n-2}$. According to  \eqref{4.11}, Lemma \ref{lemma3.1}, we obtain
\begin{equation*}
\begin{split}
\frac{1}{4}\tau\sum_{n=1}^N\|\eta^n\|^2\leqslant C \tau\sum_{n=1}^N\left(\frac{\|\epsilon^0\|^2} {4\varepsilon_2}+\varepsilon_2\|v^n\|^2\right)\leqslant C \tau\sum_{n=1}^N\frac{\|\epsilon^0\|^2} {4\varepsilon_2} +\frac{25}{4}C \varepsilon_2\tau\sum_{n=1}^N\| \eta^n\|^2.
\end{split}
\end{equation*}
By choosing sufficiently small $\varepsilon_2$, the desired result (\ref{4.12}) is obtained.

Case $3$:  Let $k=6$ with  $v^n=\eta^n-\frac{43}{30}e^{-\sigma\tau}\eta^{n-1}+\frac{2}{3}e^{-2\sigma \tau}\eta^{n-2}-\frac{1}{10}e^{-3\sigma \tau}\eta^{n-3}$. From \eqref{4.11}, Lemma \ref{lemma3.1}, it yields
\begin{equation*}
\begin{split}
\frac{1}{24}\tau\sum_{n=1}^N\|\eta^n\|^2\leqslant C \tau\sum_{n=1}^N\left(\frac{\|\epsilon^0\|^2} {4\varepsilon_3}+\varepsilon_3\|v^n\|^2\right)\leqslant C \tau\sum_{n=1}^N\frac{\|\epsilon^0\|^2} {4\varepsilon_3} +\frac{44}{3}C \varepsilon_3\tau\sum_{n=1}^N\| \eta^n\|^2.
\end{split}
\end{equation*}
By choosing a sufficiently small $\varepsilon_3$, the desired result (\ref{4.12}) is obtained.

On the one hand, there exists
\begin{equation*}
\begin{split}
\| \epsilon^n\|^2=\|e^{-\sigma n \tau}\epsilon^0+ \epsilon^n- e^{-\sigma n \tau}\epsilon^0\|^2\leqslant 2\left(\| \epsilon^0\|^2+\| \eta^n\|^2\right).
\end{split}
\end{equation*}
From  (\ref{4.12}) and the above inequality, we get
\begin{equation*}
\begin{split}
\tau\sum_{n=1}^N\| \epsilon^n\|^2\leqslant 2\left(C+1\right) \tau\sum_{n=1}^N\| \epsilon^0\|^2.
\end{split}
\end{equation*}

On the other hand, using  Cauchy-Schwarz inequality and (\ref{4.12}), it yields
\begin{equation*}
\begin{split}
\left(\tau \sum_{n=1}^{N}\|\eta^n\|\right)^2\leqslant\left(\tau \sum_{n=1}^{N}1\right) \left(\tau \sum_{n=1}^{N}\| \eta^n\|^2 \right)\leqslant C T^2\| \epsilon^0\|^2,
\end{split}
\end{equation*}
which leads to
$$\tau \sum_{n=1}^{N}\| \epsilon^n\|-\tau \sum_{n=1}^{N}\|  e^{-\sigma n \tau}\epsilon^0\|\leqslant \tau \sum_{n=1}^{N}\| \epsilon^n- e^{-\sigma n \tau}\epsilon^0\|=\tau \sum_{n=1}^{N}\|\eta^n\|\leqslant C T\| \epsilon^0\|,$$
\begin{equation*}
i.e.,
\begin{split}
\tau \sum_{n=1}^{N}\|\epsilon^n\|\leqslant \left(C+1\right) T\|\epsilon^0\|.
\end{split}
\end{equation*}
The proof is completed.
\end{proof}

\section*{Appendix}
Let \begin{equation*}
g(\zeta)=\left(\sum_{j=1}^{k}\frac{1}{j}(1-e^{-\sigma\tau}\zeta)^{j}\right)^\alpha=\sum_{j=0}^{\infty}g_j^k{\zeta}^{j},\quad g_j^k=e^{-\sigma j \tau}l_j^k.
\end{equation*}
Then the coefficients $\{l_j^k\}_{j=0}^{\infty}$ are given explicitly by the following  recurrence relation, see Theorem 1.6 of \cite{Henrici:74}.

\begin{itemize}
\item BDF1
 $$l_0^k=1,~l_j^k=\left(1-\frac{{\alpha}+1}{j}\right)l_{j-1}^k,~j\geqslant1.$$
\end{itemize}

\begin{itemize}
\item BDF2
\begin{equation*}
\begin{split}
l_0^k&=\left(\frac{3}{2}\right)^{\alpha},~l_1^k=-\left(\frac{3}{2}\right)^{\alpha}\frac{4}{3}{\alpha},\\
l_j^k&=\frac{4}{3}\left(1-\frac{{\alpha}+1}{j}\right)l_{j-1}^k+\frac{1}{3}\left(\frac{2({\alpha}+1)}{j}-1\right)l_{j-2}^k,~j\geqslant2.
\end{split}
\end{equation*}
\end{itemize}

\begin{itemize}
\item BDF3
\begin{equation*}
\begin{split}
l_0^k&=\left(\frac{11}{6}\right)^{\alpha},~l_1^k=-\left(\frac{11}{6}\right)^{\alpha}\frac{18}{11}{\alpha},~
l_2^k=\left(\frac{11}{6}\right)^{\alpha}\left(\frac{162}{121}{\alpha}^2-\frac{63}{121}{\alpha}\right),\\
l_j^k&=\frac{18}{11}\left(1-\frac{{\alpha}+1}{j}\right)l_{j-1}^k\!+\!\frac{18}{22}\left(\frac{2({\alpha}+1)}{j}-1\right)\!l_{j-2}^k\\
&\quad+\frac{2}{11}\left(1-\frac{3\left({\alpha}+1\right)}{j}\right)\!l_{j-3}^k,~j\geqslant3.
\end{split}
\end{equation*}
\end{itemize}

\begin{itemize}
\item BDF4
\begin{equation*}
\begin{split}
l_0^k&=\left(\frac{25}{12}\right)^{\alpha},~l_1^k=-\left(\frac{25}{12}\right)^{\alpha}\frac{48}{25}{\alpha},~
l_2^k=\left(\frac{25}{12}\right)^{\alpha}\left(\frac{1152}{625}{\alpha}^2-\frac{252}{625}{\alpha}\right),\\
l_3^k&=\left(\frac{25}{12}\right)^{\alpha}\left(-\frac{18432}{15625}{\alpha}^3+\frac{12096}{15625}{\alpha}^2-\frac{3664}{15625}{\alpha}\right),\\
l_j^k&=\frac{48}{25}\left(1-\frac{{\alpha}+1}{j}\right)l_{j-1}^k+\frac{36}{25}\left(\frac{2({\alpha}+1)}{j}-1\right)l_{j-2}^k\\
&\quad+\frac{16}{25}\left(1-\frac{3\left({\alpha}+1\right)}{j}\right)l_{j-3}^k+\frac{3}{25}\left(\frac{4({\alpha}+1)}{j}-1\right)l_{j-4}^k,~j\geqslant4.
\end{split}
\end{equation*}
\end{itemize}

\begin{itemize}
\item BDF5
\begin{equation*}
\begin{split}
l_0^k&=\left(\frac{137}{60}\right)^{\alpha},~l_1^k=-\left(\frac{137}{60}\right)^{\alpha}\frac{300}{137}{\alpha},~
l_2^k=\left(\frac{137}{60}\right)^{\alpha}\left(\frac{45000}{18769}{\alpha}^2-\frac{3900}{18769}{\alpha}\right),\\
l_3^k&=\left(\frac{137}{60}\right)^{\alpha}\left(-\frac{4500000}{2571353}{\alpha}^3+\frac{1170000}{2571353}{\alpha}^2-\frac{423800}{2571353}{\alpha}\right),\\
l_4^k&=\left(\frac{137}{60}\right)^{\alpha}\left(\frac{337500000}{352275361}{\alpha}^4-\frac{175500000}{352275361}{\alpha}^3+\frac{134745000}{352275361}{\alpha}^2-\frac{103893525}{352275361}{\alpha}\right),\\
l_j^k&=\!\frac{300}{137}\left(1\!-\!\frac{{\alpha}+1}{j}\right)\!l_{j-1}^k\!+\!\frac{300}{137}\left(\frac{2({\alpha}+1)}{j}\!-1\!\right)\!l_{j-2}^k
\!+\!\frac{200}{137}\left(1\!-\!\frac{3\left({\alpha}\!+\!1\right)}{j}\right)\!l_{j-3}^k\\
&\quad+\frac{75}{137}\left(\frac{4({\alpha}+1)}{j}-1\right)l_{j-4}^k+\frac{12}{137}\left(1-\frac{5\left({\alpha}+1\right)}{j}\right)l_{j-5}^k,~j\geqslant5.
\end{split}
\end{equation*}
\end{itemize}

\begin{itemize}
\item BDF6
\begin{equation*}
\begin{split}
l_0^k&=\left(\frac{147}{60}\right)^{\alpha},~l_1^k=-\left(\frac{147}{60}\right)^{\alpha}\frac{360}{147}{\alpha},~
l_2^k=\left(\frac{147}{60}\right)^{\alpha}\left(\frac{7200}{2401}{\alpha}^2+\frac{150}{2401}{\alpha}\right),\\
l_3^k&=\left(\frac{147}{60}\right)^{\alpha}\left(-\frac{288000}{117649}{\alpha}^3-\frac{18000}{117649}{\alpha}^2-\frac{42400}{352947}{\alpha}\right),\\
l_4^k&=\left(\frac{147}{60}\right)^{\alpha}\left(\frac{8640000}{5764801}{\alpha}^4+\frac{1080000}{5764801}{\alpha}^3
+\frac{1707250}{5764801}{\alpha}^2-\frac{2603575}{5764801}{\alpha}\right),\\
l_5^k&=\left(\frac{147}{60}\right)^{\alpha}\left(-\frac{207360000}{282475249}{\alpha}^5-\frac{43200000}{282475249}{\alpha}^4\right.
-\frac{14730000}{40353607}{\alpha}^3+\frac{310309000}{282475249}{\alpha}^2\\
&\quad\quad\quad\quad\quad\quad\left.-\frac{94994224}{282475249}{\alpha}\right),\\
l_j^k&=\frac{360}{147}\left(1\!-\!\frac{{\alpha}+1}{j}\right)\!l_{j-1}^k\!+\!\frac{450}{147}\left(\frac{2({\alpha}+1)}{j}\!-\!1\right)\!l_{j-2}^k
\!+\!\frac{400}{147}\left(1\!-\!\frac{3\left({\alpha}+1\right)}{j}\right)\!l_{j-3}^k\\
&\quad+\frac{225}{147}\left(\frac{4({\alpha}+1)}{j}-1\right)l_{j-4}^k+\frac{72}{147}\left(1-\frac{5\left({\alpha}+1\right)}{j}\right)l_{j-5}^k\\
&\quad+\frac{10}{147}\left(\frac{6({\alpha}+1)}{j}-1\right)l_{j-6}^k,~j\geqslant6.
\end{split}
\end{equation*}
\end{itemize}

\bibliographystyle{amsplain}

\end{document}